\newcommand*{\QEDA}{\hfill\hbox{\vrule width1.0ex height1.0ex}}
\newtheorem{thm}{Theorem}[section]
\newtheorem{theorem}[thm]{Theorem}
\newtheorem{lemma}[thm]{Lemma}
\newtheorem{proposition}[thm]{Proposition}
\newtheorem{definition}[thm]{Definition}
\newcommand{\beq}{\begin{equation}}
\newcommand{\eeq}{\end{equation}}
\newcommand{\beqa}{\begin{eqnarray}}
\newcommand{\eeqa}{\end{eqnarray}}
\newcommand{\beqas}{\begin{eqnarray*}}
\newcommand{\eeqas}{\end{eqnarray*}}
\newcommand{\bi}{\begin{itemize}}
\newcommand{\ei}{\end{itemize}}
\newcommand{\vgap}{\vspace{.1in}}
\newcommand{\nn}{\nonumber}
\newcommand{\R}{\mathbb{R}}
\newcommand{\lam}{{\lambda}}
\newcommand{\norm}[1]{\left\Vert#1\right\Vert}
\newcommand{\inner}[2]{\langle #1,#2\rangle}
\newcommand{\argmin}{\mathrm{argmin}\,}
\newcommand{\tx}{\tilde x}
\newcommand{\mE}{\mathbb{E}}
\newcommand{\rd}{\mathrm{d}}
\begin{document}
	\title{Proximal Oracles for Optimization and Sampling}
	\date{April 2, 2024 (first revision: July 8, 2025; second revision: November 11, 2025)}
	\author{
		Jiaming Liang \thanks{Goergen Institute for Data Science and Artificial Intelligence (GIDS-AI) and Department of Computer Science, University of Rochester, Rochester, NY 14620 (email: {\tt jiaming.liang@rochester.edu}). This work was partially supported by GIDS-AI seed funding and AFOSR grant FA9550-25-1-0182.}
		\qquad
		Yongxin Chen \thanks{School of Aerospace Engineering, Georgia Institute of
			Technology, Atlanta, GA, 30332. (email: {\tt yongchen@gatech.edu}). This work was supported by NSF under grants 1942523 and 2008513.}
	}
	\maketitle
	
	\begin{abstract}
 
We consider convex optimization with non-smooth objective function and log-concave sampling with non-smooth potential (negative log density). In particular, we study two specific settings where the convex objective/potential function is either Hölder smooth or in hybrid form as the finite sum of Hölder smooth components. 
To overcome the challenges caused by non-smoothness, our algorithms employ two powerful proximal frameworks in optimization and sampling: the proximal point framework for optimization and the alternating sampling framework (ASF) 
that uses Gibbs sampling on an augmented distribution.
A key component of both optimization and sampling algorithms is the efficient implementation of the proximal map by the regularized cutting-plane method.
We establish its iteration-complexity under both Hölder smoothness and hybrid settings using novel convergence analysis, yielding results that are new to the literature.
We further propose an adaptive proximal bundle method for non-smooth optimization that employs an aggressive adaptive stepsize strategy, which adjusts stepsizes only when necessary and never rejects iterates.
The proposed method is universal since it does not need any problem parameters as input.
Additionally, we provide an exact implementation of a proximal sampling oracle, analogous to the proximal map in optimization, along with simple complexity analyses for both the Hölder smooth and hybrid cases, using a novel technique based on a modified Gaussian integral.
Finally, we combine this proximal sampling oracle and ASF to obtain a Markov chain Monte Carlo method with non-asymptotic complexity bounds for sampling in Hölder smooth and hybrid settings.

	{\bf Key words.} Non-smooth optimization, proximal point method, universal method, high-dimensional sampling, Markov chain Monte Carlo, complexity analysis

	\end{abstract}
	
\section{Introduction}\label{sec:intro}




We are interested in convex optimization problems
\begin{equation}\label{eq:opt}
	\min_{x \in \R^d} f(x)
\end{equation}
as well as log-concave sampling problems
\begin{equation}\label{eq:target}
       {\rm sample} \quad \nu(x) \propto \exp(-f(x)),
\end{equation} 
where $f: \R^d \to \R$ is convex but not necessarily smooth. In sampling, a potential of the distribution $\nu(x)$ is defined as the negative log-density, which is $f(x)$ up to a constant.

Optimization and sampling are two of the most important algorithmic tools at the interface of data science and computation. Optimization has been extensively studied across a wide range of fields, including machine learning, communications, and supply chain management. Over the past two decades, particular attention has been devoted to gradient-based first-order methods. Many classical ideas have been revisited and extended to large-scale optimization, such as the randomized coordinate descent method \cite{nesterov2012efficiency}, the primal–dual hybrid gradient method \cite{chambolle2011first}, and the extragradient method \cite{MR0451121}.
Drawing samples from a given (often unnormalized) probability density plays a crucial role in many scientific and engineering problems that face uncertainty (either physically or algorithmically). Sampling algorithms are widely used in many areas such as statistical inference/estimation, operations research, physics, biology, and machine learning, etc \cite{bertsimas2004solving,durmus2018efficient,dyer1991random,gelman2013bayesian,kalai2006simulated,kannan1997random,krauth2006statistical,sites2003delimiting}. For instance, in Bayesian inference, one draws samples from the posterior distribution to infer its mean, covariance, or other important statistics. Sampling is also heavily used in molecular dynamics to discover new molecular structures. 

This work is along the recent line of research that lies in the interface of sampling and optimization \cite{durmus2019analysis,salim2020primal}. Indeed, sampling is closely related to optimization.
On the one hand, optimization can be viewed as the limiting case of sampling from the distribution $\exp(-f(x)/T)$ as the temperature parameter $T$ (which represents the level of randomness) approaches zero. In this limit, the probability mass increasingly concentrates around the minimizers of $f(x)$.
On the other hand, sampling $\nu(x)$ has an optimization interpretation \cite{jordan1998variational,wibisono2018sampling,yang2020variational}:
the Langevin dynamics in space corresponds to the Fokker-Planck equation, which is the gradient flow
of the relative entropy functional (with respect to $\nu$) in the space of measures with the Wasserstein metric.
The popular gradient-based Markov chain Monte Carlo (MCMC) methods such as Langevin Monte Carlo (LMC) \cite{dalalyan2017theoretical,grenander1994representations,parisi1981correlation,roberts1996exponential}, Metropolis-adjusted Langevin algorithm (MALA) \cite{bou2013nonasymptotic,roberts2002langevin,roberts1996exponential}, and Hamiltonian Monte Carlo (HMC) \cite{neal2011mcmc} resemble the gradient-based algorithms in optimization and can be viewed as the sampling counterparts of them. 


The goal of this paper is to develop efficient proximal algorithms to solve optimization problems \eqref{eq:opt} as well as to draw samples from potentials \eqref{eq:target}, where both $f$ in \eqref{eq:opt} and \eqref{eq:target} lack smoothness (i.e., when $f$ does not have Lipschitz continuous gradient). 
In particular, we consider two settings where the convex objective/potential function $f$ is either Hölder smooth (i.e., the (sub)gradient $f'$ is H\"older-continuous with exponent $\alpha\in [0,1]$) or a hybrid function with multiple Hölder smooth components.
The core of both proximal optimization and sampling algorithms lies in the proximal map of $f$.
We first develop a generic and efficient implementation of this proximal map.
Building on it, we design an adaptive proximal bundle method to solve problem~\eqref{eq:opt}.
Furthermore, by combining the proximal map of $f$ with rejection sampling, we propose a highly efficient approach to realize a proximal sampling oracle, which is used in a proximal sampling framework \cite{lee2021structured,chen2022improved} in the same spirit as the proximal point method for optimization. With those proximal oracles for optimization and sampling in hand, we are finally able to establish the complexity to sample from densities with non-smooth potentials.

We summarize our contributions as follows.
\begin{itemize}
    \item[i)] We analyze the complexity bounds for implementing the proximal map of $f$ using the regularized cutting-plane method in both Hölder smooth and hybrid settings (Section~\ref{sec:opt}).
    The complexity analyses for both Hölder smooth and hybrid cases, presented in Subsections \ref{subsec:bundle} and \ref{subsec:composite-OPT}, respectively, are novel contributions to the literature and employ proof techniques distinct from existing works such as \cite{diaz2023optimal,du2017rate,kiwiel2000efficiency,liang2024unified,liang2020proximal}.
    \item[ii)] We develop an adaptive proximal bundle method (APBM) using the regularized cutting-plane method and a novel adaptive stepsize strategy in the proximal point method, and establish the complexity bound for Hölder smooth optimization (Section~\ref{sec:APBM}). APBM is a universal method as it does not need any problem-dependent parameters as input.
    In contrast to standard universal methods based on conservative line searches on stepsizes, such as the universal primal gradient method of \cite{nesterov2015universal}, APBM has the benefit of adjusting stepsizes only when necessary and never rejects iterates.
    \item[iii)] We propose an efficient scheme to realize the proximal sampling oracle that lacks smoothness and establish novel techniques to bound its complexity.
    Combining the proximal sampling oracle and the proximal sampling framework, we obtain a general proximal sampling algorithm for convex Hölder smooth and hybrid potentials. Finally, we establish complexity bounds for the proximal sampling algorithm in both cases (Section~\ref{sec:sampling}).
    The complexity bounds presented in Section~\ref{sec:sampling} are similar to those in~\cite{fan2023improved}; however, they are derived under the assumption of an exact proximal sampling oracle, whereas~\cite{fan2023improved} considers an inexact implementation of the oracle. The contributions of Section~\ref{sec:sampling} lie in providing much simpler complexity analyses for the exact realization of the proximal sampling oracle in both the Hölder smooth and hybrid cases, compared to the existing analyses in~\cite{liang2022proximal,liang2023a}.
\end{itemize}

It is worth noting that this paper does not aim to establish the optimal complexity of universal methods or to improve the complexity of proximal sampling algorithms. Instead, it develops a regularized cutting-plane method as an efficient implementation of the proximal oracle used in both proximal optimization and sampling, and demonstrates its interesting applications in universal methods and proximal sampling algorithms.

\section{Proximal Optimization and Sampling}

The proximal point framework (PPF), proposed in \cite{martinet1970regularisation} and further developed in \cite{MR0418919,rockafellar1976monotone} (see \cite{ParBoy14} for a modern and comprehensive monograph), is a general class of optimization algorithms that involve solving a sequence of subproblems of the form
\begin{equation}\label{eq:prox-sub}
    x_{k+1} \leftarrow \argmin \left\{f(x)+\frac{1}{2\eta}\|x-x_{k}\|^2: x\in \R^d\right\},
\end{equation}
where $\eta>0$ is a prox stepsize and $\leftarrow$ means the subproblem can be solved either exactly or approximately. When the exact solution is available, we denote 
\[
x_{k+1} = {\rm prox}_{\eta f}(x_k),
\]
where ${\rm prox}_{f}(\cdot)$ is called a proximal map of $f$ and defined as
\begin{equation}\label{def:prox-map}
    {\rm prox}_{f}(y): = \argmin \left\{f(x)+\frac{1}{2}\|x-y\|^2: x\in \R^d\right\}.
\end{equation}
If the subproblem \eqref{eq:prox-sub} does not admit a closed-form solution, it can usually be solved with standard or specialized iterative methods.

Many classical first-order methods in optimization, such as the proximal gradient method, the proximal subgradient method, the primal-dual hybrid gradient method of \cite{chambolle2011first} (also known as the Chambolle-Pock method), the extra gradient method of \cite{MR0451121}
are instances of PPF.
It is worth noting that, by showing that the alternating direction method of multipliers (ADMM) as an instance of PPF, \cite{monteiro2010iteration} gives the first iteration-complexity result of ADMM for solving a class of linearly constrained convex programming problems.

Another example of PPF is the proximal bundle method, which was first proposed in \cite{lemarechal1975extension,lemarechal1978nonsmooth,mifflin1982modification,wolfe1975method} and further developed in \cite{diaz2023optimal,du2017rate,frangioni2002generalized,kiwiel2000efficiency,liang2024unified,liang2020proximal,de2014convex,ruszczynski2011nonlinear,van2017probabilistic}. Notably, inspired by the PPF viewpoint, papers \cite{liang2024unified,liang2020proximal} develop a variant of the proximal bundle method and establish the optimal iteration-complexity, which is the first optimal complexity result for proximal bundle methods.
Recent works \cite{kong2019complexity,jliang2018double,kong2021accelerated,liang2023proximal} have also applied PPF to solve weakly convex optimization and weakly convex-concave min-max problems.

{\bf Proximal map in sampling.} Sampling shares many similarities with optimization. An interesting connection between the two problems is through the algorithm design and analysis from the perspective of PPF.
The alternating sampling framework (ASF) introduced in \cite{lee2021structured} is a generic framework for sampling from a distribution $\pi^X(x) \propto \exp(-f(x))$.
Analogous to PPF in optimization, ASF with stepsize $\eta>0$ repeats the two steps as in Algorithm~\ref{alg:ASF}.
\begin{algorithm}[H]
	\caption{Alternating Sampling Framework \cite{lee2021structured}}
	\label{alg:ASF}
	\begin{algorithmic}
		\STATE 1. Sample $y_{k}\sim \pi^{Y|X}(y\mid x_k) \propto \exp\left(-\frac{1}{2\eta}\|x_k-y\|^2\right)$
		\STATE 2. Sample $x_{k+1}\sim \pi^{X|Y}(x \mid y_{k}) \propto \exp\left(-f(x)-\frac{1}{2\eta}\|x-y_{k}\|^2\right)$
	\end{algorithmic}
\end{algorithm}
ASF is a special case of Gibbs sampling \cite{GemGem84} of the joint distribution 
    \[
        \pi(x,y) \propto \exp\left(-f(x)-\frac{1}{2\eta}\|x-y\|^2\right).
    \]
Starting from the original paper~\cite{lee2021structured} that proposes ASF, subsequent works have refined and extended this framework. In particular, \cite{chen2022improved} provides an improved theoretical analysis of ASF, and \cite{yuan2023class} studies Gibbs sampling based on ASF for structured log-concave distributions over networks.
In Algorithm~\ref{alg:ASF}, sampling $y_k$ given $x_k$ in step 1 can be easily done since $\pi^{Y|X}(y\mid x_k) = {\cal N}(x_k,\eta I)$ is a simple Gaussian distribution.
Sampling $x_{k+1}$ given $y_k$ in step 2 is however a nontrivial task; it corresponds to the so-called restricted Gaussian oracle (RGO) for $f$ introduced in \cite{lee2021structured}, which is defined as follows.

\begin{definition}
Given a point $y\in \R^d$ and stepsize $\eta >0$, the RGO for $f:\R^d\to \R$ is a sampling oracle that returns a random sample from a distribution proportional to $\exp(-f(\cdot) - \|\cdot-y\|^2/(2\eta))$.
\end{definition}

RGO is an analog of the proximal map \eqref{def:prox-map} in optimization.
To use ASF in practice, one needs to efficiently implement RGO. Some examples of $f$ that admit a computationally efficient RGO have been presented in \cite{mou2019efficient,shen2020composite}. These instances of $f$ have simple structures such as coordinate-separable regularizers, $\ell_1$-norm, and group Lasso. To apply ASF on a general potential function $f$, developing an efficient implementation of the RGO is essential.


A rejection sampling-based implementation of RGO for general convex nonsmooth potential function $f$ with bounded Lipschitz constant is given in \cite{liang2022proximal}. 
If the stepsize $\eta$ is small enough, then it only takes a constant number of rejection steps to generate a sample according to RGO in expectation.
Another exact realization of RGO is provided in \cite{liang2023a} for nonconvex hybrid potential $f$ satisfying Hölder continuous conditions. It is also shown that the expected number of rejections to implement RGO is a small constant if $\eta$ is small enough. Other inexact realizations of RGO based on approximate rejection sampling are studied in \cite{gopi2022private,fan2023improved}. 
See Table \ref{tab:rgo} for a clear comparison.
In all these implementations, a key step is realizing the proximal map \eqref{def:prox-map}. It is worth noting that \cite{liang2023a} also connects ASF with other well-known Langevin-type sampling algorithms such as Langevin Monte Carlo (LMC) and Proximal Langevin Monte Carlo (PLMC) via RGO. In a nutshell, \cite{liang2023a} shows that both LMC and PLC are instances of ASF but with approximate implementations of RGO, which always accept the sample from the proposal distribution without rejection. Hence, this provides an alternative interpretation of why the samples generated by LMC are biased, while those produced by ASF are unbiased.

\begin{table}[t]
    \centering
    \begin{tabular}{ccc}
        \toprule
        \textbf{Papers} & \textbf{RGO implementation} & \textbf{Stepsize $\eta$} \\
        \midrule
        \cite{liang2022proximal,liang2023a} & Exact & Small \\ 
        \cite{gopi2022private,fan2023improved} & Approximate & Large \\ 
        \bottomrule
    \end{tabular}
    \caption{Comparison of different RGO implementations and corresponding stepsizes.}
    \label{tab:rgo}
\end{table}

Based on the cutting-plane method, this paper develops a generic and efficient implementation of the proximal map \eqref{def:prox-map} and applies the proximal map in both optimization and sampling. For optimization, we use this proximal map and an adaptive stepsize rule to design a universal bundle method. For sampling, we combine this proximal map and rejection sampling to realize the RGO, and then propose a practical and efficient proximal sampling algorithm based on it.

For both optimization and sampling, we consider two specific scenarios: 1) $f$ is Hölder smooth, i.e., $f$ satisfies 
\begin{equation}\label{ineq:semi-smooth}
    \|f'(u) - f'(v)\| \le L_\alpha\|u-v\|^\alpha,\quad \forall u, v\in \R^d,
    \end{equation}
where $f'$ denotes a subgradient of $f$, $\alpha \in [0,1]$,  and $L_\alpha>0$; and 2) $f$ is a hybrid function of Hölder smooth components, i.e., $f$ satisfies 
    \begin{equation}\label{ineq:composite}
    \|f'(u) - f'(v)\| \le \sum_{i=1}^n L_{\alpha_i}\|u-v\|^{\alpha_i},\quad \forall u, v\in \R^d,
    \end{equation}
where $\alpha_i \in [0,1]$ and $L_{\alpha_i}>0$ for every $1\le i \le n$.
When $\alpha=0$, \eqref{ineq:semi-smooth} reduces to a Lipschitz continuous condition, and when $\alpha=1$, it reduces to a smoothness condition.
It follows from \eqref{ineq:semi-smooth} and \eqref{ineq:composite} that for every $u,v \in \R^d$,
\begin{equation}\label{ineq:semi}
    f(u) - f(v) - \inner{f'(v)}{u-v} \le \frac{L_\alpha}{\alpha+1} \|u-v\|^{\alpha+1},
\end{equation}
and
\begin{equation}\label{ineq:hybrid}
    f(u) - f(v) - \inner{f'(v)}{u-v} \le \sum_{i=1}^n \frac{L_{\alpha_i}}{\alpha_i+1} \|u-v\|^{\alpha_i+1}.
\end{equation}
The proof is given in Appendix~\ref{sec:technical}.

\paragraph{Example.}
Consider the $\ell_p$ regression problem with data $\{(a_i,b_i)\}_{i=1}^n$ where $a_i\in \R^d$ and $b_i \in \R$ for $i=1,\ldots,n$,
\begin{equation}\label{eq:lp}
    f(x) = \frac{1}{n}\sum_{i=1}^n |a_i^\top x - b_i|^p,
\qquad 1 \le p \le 2.
\end{equation}
Define $\phi(t) = |t|^p$, then $\phi'(t) = p\,\operatorname{sign}(t)\,|t|^{p-1}$ and
\[
f'(x) = \frac{1}{n}\sum_{i=1}^n \phi'(a_i^\top x - b_i)\,a_i.
\]
It is shown in Lemma~\ref{lem:Lip} of Appendix~\ref{sec:technical} that $\phi'$ is Hölder continuous with exponent $p-1$ and constant $p 2^{2-p}$.
For any $x,y \in \R^d$, let $u_i = a_i^\top x - b_i$ and $v_i = a_i^\top y - b_i$, using the Hölder continuity of $\phi'$,  we derive
\begin{align*}
&\|f'(x) - f'(y)\|
= \Big\| \frac{1}{n}\sum_{i=1}^n 
  \big(\phi'(u_i) - \phi'(v_i)\big)a_i \Big\| \\
\le &\frac{1}{n}\sum_{i=1}^n
   |\phi'(u_i) - \phi'(v_i)|\,\|a_i\| 
\le \frac{p\,2^{2-p}}{n}
   \Big(\sum_{i=1}^n \|a_i\|^p\Big)
   \|x - y\|^{p-1}.
\end{align*}
Hence, $f$ satisfies the Hölder smoothness condition \eqref{ineq:semi-smooth} with
\[
\alpha = p-1,\qquad
L_\alpha
= \frac{p\,2^{2-p}}{n}\sum_{i=1}^n \|a_i\|^{p}.
\]
The $\ell_p$ regression can be extended to mixed-exponent regression as an example of the hybrid case \eqref{ineq:composite}, where 
\begin{equation}\label{eq:mixed}
    f(x) = \frac{1}{n}\sum_{i=1}^{n} |a_i^\top x - b_i|^{p_i}, 
\qquad 1 \le p_i \le 2,
\end{equation}
and
\[
\alpha_i = p_i-1,\qquad
L_{\alpha_i} = \frac{p_i\,2^{2-p_i}}{n} \|a_i\|^{p_i}.
\]


The above objective functions $f$ in \eqref{eq:lp} and \eqref{eq:mixed} can also appear as the potential energy in 
Bayesian inference. 
Instead of minimizing $f(x)$ to obtain a point estimate (e.g., the maximum a posteriori or MAP solution), one may consider sampling $\nu(x) \propto \exp(-f(x))$ for quantifying uncertainty around the MAP solution.


Throughout the analysis in this paper, we use the following notation. 
When presenting complexity results, ${\cal O}(\cdot)$ denotes the standard ``big-O'' notation, 
while $\tilde {\cal O}(\cdot)$ suppresses polylogarithmic factors. 
We also write $a \asymp b$ to indicate that $a$ and $b$ are of the same order, 
i.e., there exist positive constants $c_1, c_2 > 0$ such that 
$c_1 a \le b \le c_2 a$.

\section{Algorithm and Complexities for the Proximal Subproblem}\label{sec:opt}

The proximal subproblem \eqref{eq:prox-sub} generally does not admit a closed-form solution. We design an iterative method that approximately solves \eqref{eq:prox-sub} and derive the corresponding iteration-complexities for Hölder smooth and hybrid $f$ in Subsections \ref{subsec:bundle} and \ref{subsec:composite-OPT}, respectively.

Given a point $y\in \R^d$, we consider the optimization problem
\begin{equation}\label{eq:subproblem}
    f_y^\eta (x^*)= \min \left\{ f_y^\eta(x)= f(x) + \frac{1}{2\eta}\|x-y\|^2: x\in \R^d\right \}
\end{equation}
and aim at obtaining a $\delta$-solution, i.e., a point $\bar x$ such that $f_y^\eta(\bar x) - f_y^\eta(x^*) \le \delta$.
In both Hölder smooth and hybrid settings, we use a regularized cutting-plane method (Algorithm \ref{alg:PBS}), which is usually used in the proximal bundle method \cite{liang2020proximal,liang2024unified} for solving convex non-smooth optimization problems. 
We remark that though Algorithm \ref{alg:PBS} is widely used in the proximal bundle method and is not new, the complexity analyses (i.e., Theorems \ref{thm:bundle} and \ref{thm:bundle-hybrid}) for Hölder smooth and hybrid functions $f$ are lacking.

Since the prox center $y$ is fixed throughout this section, we simplify the notation by writing $f_y^\eta$ as $f^\eta$ in this section to ease readability.

\begin{algorithm}[H]
	\caption{Regularized Cutting-plane Method} 
	\label{alg:PBS}
	\begin{algorithmic}
        \REQUIRE Let $y\in \R^d$, $\eta>0$, and $\delta>0$ be given, and set $x_0=\tx_0=y$, $j=1$, and $f_0^\eta(x_0)=-\infty$.
        \WHILE{$f^\eta(\tx_{j-1}) - f_{j-1}^\eta(x_{j-1}) > \delta$}
        \STATE 
        \vspace{-5mm}
        \begin{align}
            f_j(x) &= \max \left\lbrace  f(x_i)+\inner{f'(x_i)}{x-x_i} : \, 0\le i\le j-1\right\rbrace, \label{eq:fj} \\
            x_j &=\argmin
	        \left\lbrace f_j^\eta(x):= f_j(x) +\frac{1}{2\eta}\|x- y\|^2 : x\in  \R^d \right\rbrace, \label{def:xj} \\
            \tx_j &= \argmin\left\lbrace f^\eta(x): x\in \{x_j, \tx_{j-1}\}\right\rbrace, \label{def:tx} \\
            j &\gets j + 1. \nn
        \end{align}
        \vspace{-5mm}
        \ENDWHILE
        \RETURN $J=j-1$, $x_J$, and $\tilde x_J$.
	\end{algorithmic}
\end{algorithm}

The basic idea of Algorithm \ref{alg:PBS} is to approximate $f$ with piece-wise affine functions constructed by a collection of cutting-planes and solve the resulting simplified problem \eqref{def:xj}. As the approximation becomes more and more accurate, the best approximate solution $\tx_j$ converges to the solution $x^*$ to \eqref{eq:subproblem}.
Subproblem \eqref{def:xj} can be reformulated into convex quadratic programming with $j$ affine constraints and hence is solvable.

The following technical lemma summarizes basic properties of Algorithm~\ref{alg:PBS}. It is useful in the complexity analysis for both optimization and sampling.

\begin{lemma}\label{lem:bundle}
Assume $f$ is convex. 
For every $j\ge 1$, define 
\begin{equation}\label{def:tj}
    \delta_j:=f^\eta(\tx_j) - f_j^\eta(x_j).
\end{equation}
Let $J, x_J, \tilde x_J$ be the outputs of Algorithm \ref{alg:PBS}, then the following statements hold:
\begin{itemize}
    \item[a)] $\{f_j\}$ serves as a sequence of non-decreasing lower approximations of $f$: $f_j(x)\le f_{j+1}(x)$ and $f_j(x)\le f(x)$, $\forall x\in \R^d$ and $\forall j\ge 1$;

    \vspace{0.2cm}
    
    \item[b)] direct consequence of \eqref{def:xj}: $f_j^\eta(x_j) + \|x-x_j\|^2/(2\eta) \le f_j^\eta(x)$, $\forall x\in \R^d$ and $\forall j \ge 1$;

    \vspace{0.2cm}
    
    \item[c)] $\{\delta_j\}$ is a decreasing sequence:
    $\delta_J \le \delta$ and $\delta_{j+1} + \frac{1}{2\eta}\|x_{j+1}-x_j\|^2 \le \delta_j$, $\forall j \ge 1$;

    \vspace{0.2cm}
    
    \item[d)] solution guarantee for $x_J$ and $\tx_J$:
    $f^\eta(\tilde x_J) - f^\eta(x) \le\delta - \frac1{2\eta}\|x_J - x\|^2$, $\forall x\in \R^d$;

    \vspace{0.2cm}
    
    \item[e)] optimality condition of \eqref{eq:subproblem}: $ - \frac{1}{\eta} (x^*-y) \in\partial f(x^*)$ where $\partial f$ denotes the subdifferential of $f$.
\end{itemize}
\end{lemma}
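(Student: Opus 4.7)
My plan is to establish the five statements in order, exploiting two recurring structural facts: convexity of $f$ and $(1/\eta)$-strong convexity of the regularized cutting-plane model $f_j^\eta = f_j + \|\cdot - y\|^2/(2\eta)$. First, part (a) will follow directly from the definition of $f_j$ as a pointwise max of affine minorants: adding the cut built at $x_j$ can only increase the max, so $f_j \le f_{j+1}$; and since $f$ is convex, each affine piece $f(x_i) + \langle f'(x_i), \cdot - x_i\rangle$ lies below $f$, hence so does their max. Next, for part (b), I would use that $f_j^\eta$ is $(1/\eta)$-strongly convex (the quadratic term contributes modulus $1/\eta$ and $f_j$ is convex) and that $x_j$ is its unique minimizer; the claimed inequality is then the standard quadratic growth bound at the minimizer of a strongly convex function.

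For part (c), $\delta_J \le \delta$ is immediate from the Step 4 stopping criterion. For the monotone recursion I would write $\delta_{j+1} = f_y^\eta(\tx_{j+1}) - f_{j+1}^\eta(x_{j+1})$, bound the first term via $f_y^\eta(\tx_{j+1}) \le f_y^\eta(\tx_j)$ (this is precisely the purpose of the update \eqref{def:tx}), and lower bound the second by chaining the monotonicity $f_{j+1} \ge f_j$ from (a) with part (b) applied at $x = x_{j+1}$, yielding $f_{j+1}^\eta(x_{j+1}) \ge f_j^\eta(x_{j+1}) \ge f_j^\eta(x_j) + \|x_{j+1}-x_j\|^2/(2\eta)$. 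Subtracting produces the claimed recursion. Part (d) then follows by the telescoping split $f_y^\eta(\tx_J) - f_y^\eta(x) = [f_y^\eta(\tx_J) - f_J^\eta(x_J)] + [f_J^\eta(x_J) - f_y^\eta(x)]$: the first bracket is exactly $\delta_J \le \delta$, and for the second I would apply (b) to obtain $f_J^\eta(x_J) \le f_J^\eta(x) - \|x_J - x\|^2/(2\eta)$, together with the pointwise minorization $f_J \le f$ from (a) to get $f_J^\eta(x) \le f_y^\eta(x)$.

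Finally, part (e) is just the first-order optimality condition for the unconstrained strongly convex problem \eqref{eq:subproblem}: $0 \in \partial f_y^\eta(x^*) = \partial f(x^*) + (x^* - y)/\eta$, which rearranges to the stated inclusion. I expect no step to be a serious obstacle; the mildly delicate point is the chain of inequalities in part (c), where the monotonicity of the cutting-plane models and the strong-convexity-at-minimizer bound must be combined with correct orientation so that the $\|x_{j+1}-x_j\|^2/(2\eta)$ term appears with the right sign in the descent inequality.
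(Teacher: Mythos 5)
Your proof is correct and follows essentially the same route as the paper's: parts (a), (b), (e) are the standard definitional/strong-convexity/optimality observations, and part (c) chains the model monotonicity $f_j\le f_{j+1}$ with the quadratic-growth bound from (b) exactly as the paper does. Your telescoping split for (d) — isolating $\delta_J$ and then bounding $f_J^\eta(x_J)-f_y^\eta(x)$ via (b) and $f_J\le f$ — is just a cleaner rearrangement of the same chain of inequalities the paper writes in a single displayed computation.
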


\begin{proof}
a) The first inequality follows from the definition of $f_j$ in step 2 of Algorithm \ref{alg:PBS}.
The second inequality directly follows from the definition of $f_j$ and the convexity of $f$.

b) Noting that $f_j^\eta$ as the objective function of \eqref{def:xj} is $(1/\eta)$-strongly convex, it thus follows from Theorem 5.25 of \cite{beck2017first} that 
\[
f_j^\eta(x) - f_j^\eta(x_j) \ge \frac{1}{2\eta}\|x-x_j\|^2, \quad \forall x\in \R^d.
\]
Hence, this statement follows.

c) This first inequality immediately follows from \eqref{def:tj} and step 4 of Algorithm \ref{alg:PBS}.

Using the first inequality in \ref{lem:bundle}(a) and \ref{lem:bundle}(b) with $x=x_{j+1}$, we obtain
\[
f_{j+1}^\eta(x_{j+1}) \ge f_j^\eta(x_{j+1}) \ge f_j^\eta(x_j) +\frac{1}{2\eta}\|x_{j+1}-x_j\|^2.
\]
This inequality, the definition of $\tx_j$ in \eqref{def:tx}, and the definition of $\delta_j$ in \eqref{def:tj} imply that
\begin{align*}
    \delta_{j+1} &= f^\eta(\tx_{j+1}) - f_{j+1}^\eta(x_{j+1})
    \le f^\eta(\tx_j) - f_j^\eta(x_j) -\frac{1}{2\eta}\|x_{j+1}-x_j\|^2\\
    &= \delta_j -\frac{1}{2\eta}\|x_{j+1}-x_j\|^2.
\end{align*}

d) Using the second inequality in (a), (b) with $j=J$, and the first inequality in (c), we have
  \begin{align*}
    f(\tx_J) &- f(x) + \frac{1}{2\eta}\|x-x_J\|^2 
     \stackrel{\text{(a)}}\le f(\tx_J) - f_J(x) + \frac{1}{2\eta}\|x-x_J\|^2 \\
    & \stackrel{\text{(b)}}\le f(\tx_J) - f_J^\eta(x_J) + \frac{1}{2\eta}\|x-y\|^2
     \stackrel{\text{(c)}}\le \delta - \frac{1}{2\eta}\|\tx_J-y\|^2 + \frac{1}{2\eta}\|x-y\|^2.
\end{align*}
This statement then follows from rearranging the terms and the definition of $f^\eta$ in \eqref{eq:subproblem}.

e) This statement directly follows from the first-order optimality condition of \eqref{eq:subproblem}.
\end{proof}

Clearly, when Algorithm \ref{alg:PBS} terminates, the output $\tilde x_J$ is a $\delta$-solution to \eqref{eq:subproblem}. To see this, note that, using the first inequality in Lemma \ref{lem:bundle}(c), \eqref{def:xj}, and the fact that $f_J^\eta(\cdot)\le f^\eta(\cdot)$, we have
    \[
        f^\eta(\tilde x_J) \le \delta + f_J^\eta(x_J) \overset{\eqref{def:xj}}{\le} \delta + f_J^\eta(x^*) \le \delta + f^\eta(x^*).
    \]
It is also easy to see that $\delta_j$ is computable upper bound on the gap $f^\eta(\tx_j) - f^\eta(x^*)$. Hence, Algorithm \ref{alg:PBS} terminates when $\delta_j \le \delta$. 

\subsection{Complexity for Hölder Smooth Optimization}\label{subsec:bundle}

This subsection is devoted to the complexity analysis of Algorithm \ref{alg:PBS} for solving \eqref{eq:subproblem} where $f$ is Hölder smooth, i.e., satisfying \eqref{ineq:semi-smooth}.
The following lemma provides basic recursive formulas and is the starting point of the analysis of Algorithm~\ref{alg:PBS}.

\begin{lemma}\label{lem:tj}
Assume $f$ is convex and $L_\alpha$-Hölder smooth.
Then, for every $j\ge 1$, the following statements hold:
\begin{itemize}
    \item[a)] $\delta_j \le \frac {L_\alpha}{\alpha+1} \|x_j-x_{j-1}\|^{\alpha+1}$; 
    \item[b)] $\delta_{j+1} + \frac{1}{2\eta} \left(\frac{\alpha+1}{L_\alpha} \delta_{j+1}\right)^{\frac{2}{\alpha+1}} \le \delta_j$.
\end{itemize}
\end{lemma}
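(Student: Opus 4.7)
The plan is to derive (a) directly from the definition of $\delta_j$ and the semi-smoothness inequality, and then to obtain (b) as a simple combination of (a) at index $j+1$ with the descent bound already established in Lemma~\ref{lem:bundle}(c).

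For part (a), I would start by using the definition \eqref{def:tx} of $\tx_j$, which gives $f_y^\eta(\tx_j) \le f_y^\eta(x_j)$, so that
\[
\delta_j = f_y^\eta(\tx_j) - f_j^\eta(x_j) \le f_y^\eta(x_j) - f_j^\eta(x_j) = f(x_j) - f_j(x_j).
\]
Next I would use that $f_j$ is the max of the affine minorants $f(x_i) + \langle f'(x_i), \cdot - x_i\rangle$ for $0\le i\le j-1$, so in particular $f_j(x_j) \ge f(x_{j-1}) + \langle f'(x_{j-1}), x_j - x_{j-1}\rangle$. Substituting this back yields
\[
\delta_j \le f(x_j) - f(x_{j-1}) - \langle f'(x_{j-1}), x_j - x_{j-1}\rangle,
\]
and an application of the semi-smoothness consequence \eqref{ineq:semi} at $u = x_j$, $v = x_{j-1}$ immediately bounds the right-hand side by $\frac{L_\alpha}{\alpha+1}\|x_j - x_{j-1}\|^{\alpha+1}$.

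For part (b), I would apply (a) at index $j+1$ to obtain $\delta_{j+1} \le \frac{L_\alpha}{\alpha+1}\|x_{j+1} - x_j\|^{\alpha+1}$, then solve for $\|x_{j+1}-x_j\|^2$ to get
\[
\|x_{j+1} - x_j\|^2 \ge \left(\frac{\alpha+1}{L_\alpha}\delta_{j+1}\right)^{2/(\alpha+1)}.
\]
Plugging this lower bound into the recursion $\delta_{j+1} + \frac{1}{2\eta}\|x_{j+1}-x_j\|^2 \le \delta_j$ from Lemma~\ref{lem:bundle}(c) yields (b).

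I do not anticipate any real obstacle: part (a) is a standard cutting-plane gap estimate, and the main content is simply recognizing that evaluating $f_j$ only at the latest cut $i = j-1$ already suffices to invoke \eqref{ineq:semi}. The minor thing to be slightly careful with is making sure the index shift in part (b) (using (a) at $j+1$ together with the descent bound that decreases $\delta$ from $j$ to $j+1$) is done consistently; everything else is algebraic manipulation.
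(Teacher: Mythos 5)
Your proof is correct and matches the paper's argument essentially step by step: part (a) uses $f_y^\eta(\tilde x_j)\le f_y^\eta(x_j)$, then drops from the max defining $f_j$ to the single cut at $x_{j-1}$, then applies \eqref{ineq:semi}; part (b) combines (a) at index $j+1$ with the recursion in Lemma~\ref{lem:bundle}(c). No substantive difference from the paper.
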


\begin{proof}
a) It follows from the definition of $\delta_j$ in \eqref{def:tj} and the definition of $\tx_j$ in \eqref{def:tx} that
\begin{align*}
    \delta_j & \stackrel{\eqref{def:tj}}=f^\eta(\tx_j) - f_j^\eta(x_j) \stackrel{\eqref{def:tx}} \le f^\eta(x_j) - f_j^\eta(x_j) = f(x_j)-f_j(x_j) \\
    &\le f(x_j) - f(x_{j-1}) - \inner{f'(x_{j-1})}{x_j-x_{j-1}} \\
    &\le \frac{L_\alpha}{\alpha+1} \|x_j-x_{j-1}\|^{\alpha+1},
\end{align*}
where the second inequality is due to the definition of $f_j$ in the step 2 of Algorithm \ref{alg:PBS}, and the third inequality is due to \eqref{ineq:semi} with $(u,v)=(x_j,x_{j-1})$.

b) This statement directly follows from a) and the second inequality in Lemma \ref{lem:bundle}(c).
\end{proof}

We know from Lemma \ref{lem:bundle}(c) that $\{\delta_j\}_{j\ge1}$ is non-increasing.
The next proposition gives a bound on $j$ so that $\delta_j\le \delta$, i.e., the termination criterion in step 4 of Algorithm \ref{alg:PBS} is satisfied.

\begin{proposition}\label{prop:tj}
Define
\begin{equation}\label{def:beta}
    \beta := \frac{1}{2\eta} \left(\frac{\alpha+1}{L_\alpha} \right)^{\frac{2}{\alpha+1}} \delta^{\frac{1-\alpha}{\alpha+1}}, \quad 
    j_0=1+\left\lceil\frac{1+\beta}{\beta} \log\left(\frac{\delta_1}{\delta}\right)\right\rceil.
\end{equation}
Then, the following statements hold:
\begin{itemize}
    \item[a)] if $\delta_j>\delta$, then $(1+\beta)\delta_j\le 
    \delta_{j-1}$;
    \item[b)] $\delta_j\le \delta$ for every $j\ge j_0$.
\end{itemize}
As a consequence, the iteration count $J$ in Algorithm \ref{alg:PBS} satisfies $J\le j_0$.
\end{proposition}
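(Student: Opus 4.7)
The plan is to read both parts off of the recursion in Lemma~\ref{lem:tj}(b), which after a shift of index reads
\[
    \delta_j + \frac{1}{2\eta}\left(\frac{\alpha+1}{L_\alpha}\right)^{\frac{2}{\alpha+1}} \delta_j^{\frac{2}{\alpha+1}} \le \delta_{j-1}.
\]
For part (a), I would factor $\delta_j^{2/(\alpha+1)} = \delta_j \cdot \delta_j^{(1-\alpha)/(\alpha+1)}$ and use $\alpha\in[0,1]$ together with $\delta_j>\delta$ to bound $\delta_j^{(1-\alpha)/(\alpha+1)}\ge \delta^{(1-\alpha)/(\alpha+1)}$, so that the second term on the left becomes $\beta\,\delta_j$ with $\beta$ as in \eqref{def:beta}, yielding $(1+\beta)\delta_j\le \delta_{j-1}$.

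For part (b), the key point is that $\{\delta_j\}$ is non-increasing by Lemma~\ref{lem:bundle}(c), so if $\delta_{j_0}>\delta$ then in fact $\delta_i>\delta$ for every $i\le j_0$, which lets me apply (a) repeatedly to obtain the geometric bound $\delta_{j_0}\le \delta_1/(1+\beta)^{j_0-1}$. Combined with the standard inequality $\log(1+\beta)\ge \beta/(1+\beta)$ for $\beta\ge 0$ and the choice of $j_0$ in \eqref{def:beta}, this forces $(1+\beta)^{j_0-1}\ge \delta_1/\delta$, i.e.\ $\delta_{j_0}\le \delta$, a contradiction. Hence $\delta_{j_0}\le \delta$, and by monotonicity $\delta_j\le \delta$ for every $j\ge j_0$.

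The final claim $J\le j_0$ is then immediate from the stopping rule in step~4 of Algorithm~\ref{alg:PBS}, since the algorithm terminates as soon as $\delta_j\le\delta$.

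I do not anticipate a real obstacle here: the whole argument is bookkeeping around the recursion of Lemma~\ref{lem:tj}(b). The one spot that needs care is isolating the correct exponent of $\delta$ in the definition of $\beta$—this is exactly where the semi-smooth exponent $\alpha$ enters and is the reason the threshold $\beta$ itself depends on $\delta$ when $\alpha<1$—and the small logarithmic estimate $\log(1+\beta)\ge \beta/(1+\beta)$ that turns the geometric contraction into the explicit iteration bound $j_0$.
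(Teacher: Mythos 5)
Your proof is correct and follows essentially the same path as the paper's: part (a) uses the same factorization $\delta_j^{2/(\alpha+1)}=\delta_j\cdot\delta_j^{(1-\alpha)/(\alpha+1)}$ together with $\delta_j>\delta$, and part (b) is the same contradiction/geometric-decay argument with $\log(1+\beta)\ge\beta/(1+\beta)$, which is exactly the paper's estimate $\tau\le\exp(\tau-1)$ rewritten with $\tau=1/(1+\beta)$. No gaps.
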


\begin{proof}
a) Using the definition of $\beta$ in \eqref{def:beta}, the assumption that $\delta_j>\delta$, and Lemma \ref{lem:tj}(b), we obtain
\[
(1+\beta) \delta_j = \delta_j + \frac{1}{2\eta} \left(\frac{\alpha+1}{L_\alpha} \right)^{\frac{2}{\alpha+1}} \delta^{\frac{1-\alpha}{\alpha+1}} \delta_j
\le \delta_j + \frac{1}{2\eta} \left(\frac{\alpha+1}{L_\alpha} \delta_j\right)^{\frac{2}{\alpha+1}}
\le \delta_{j-1}.
\]

b) Since $\{\delta_j\}_{j\ge 1}$ is non-increasing, it suffices to prove that $\delta_{j_0}\le\delta$.
We prove this statement by contradiction. Suppose that $\delta_{j_0}>\delta$, then we have $\delta_j>\delta$ for $j\le j_0$.
Hence, statement (a) holds for $j\le j_0$. Using this conclusion repeatedly
and the fact that $\tau \le \exp(\tau -1)$ with $\tau=1/(1+\beta)$, we have
\[
\delta_{j_0}\le \frac{1}{(1+\beta)^{j_0-1}} \delta_1 \le \exp\left(-\frac{\beta}{1+\beta}(j_0-1)\right) \delta_1 \le \delta,
\]
where the last inequality is due to the definition of $j_0$ in \eqref{def:beta}.
This contradicts with the assumption that $\delta_{j_0}>\delta$, and hence we prove this statement.
\end{proof}

The following result shows that $\delta_1$ is bounded from above, and hence the bound in Proposition~\ref{prop:tj} is meaningful.

\begin{lemma}\label{lem:t1}
For a given $y\in \R^d$, we have
\[
\delta_1 \le \frac {L_\alpha \eta^{\alpha+1}}{\alpha+1} \|f'(y)\|^{\alpha+1}.
\]
\end{lemma}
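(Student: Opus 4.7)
The plan is to apply Lemma~\ref{lem:tj}(a) at the index $j=1$, which gives
\[
\delta_1 \le \frac{L_\alpha}{\alpha+1}\|x_1 - x_0\|^{\alpha+1}.
\]
Since Algorithm~\ref{alg:PBS} initializes $x_0 = y$, it only remains to show $\|x_1 - y\| \le \eta\|f'(y)\|$. Once this is in hand, substituting into the above bound yields exactly the claimed inequality.

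To compute $\|x_1 - y\|$, I would unwind the definition of $x_1$ at the very first iteration. At $j=1$, the bundle consists of the single cut at $x_0 = y$, so
\[
f_1(x) = f(y) + \inner{f'(y)}{x - y},
\]
and consequently
\[
f_1^\eta(x) = f(y) + \inner{f'(y)}{x-y} + \frac{1}{2\eta}\|x-y\|^2.
\]
This is a strongly convex quadratic in $x$; setting its gradient to zero gives the closed form $x_1 = y - \eta f'(y)$, so $\|x_1 - y\| = \eta\|f'(y)\|$.

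Plugging this back into the consequence of Lemma~\ref{lem:tj}(a) gives
\[
\delta_1 \le \frac{L_\alpha}{\alpha+1}\bigl(\eta\|f'(y)\|\bigr)^{\alpha+1} = \frac{L_\alpha \eta^{\alpha+1}}{\alpha+1}\|f'(y)\|^{\alpha+1},
\]
as required. There is no real obstacle here: the argument is essentially a one-step specialization of Lemma~\ref{lem:tj}(a), and the only computation is the closed-form minimizer of a strictly convex quadratic, which follows from the fact that $f_1^\eta$ is the regularization of an affine function.
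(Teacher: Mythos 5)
Your proof is correct and follows the same route as the paper: invoke Lemma~\ref{lem:tj}(a) at $j=1$ and note that the first-order optimality condition for \eqref{def:xj} with $j=1$ gives $x_1 = y - \eta f'(y)$, hence $\|x_1 - x_0\| = \eta\|f'(y)\|$. You simply spell out the closed-form minimizer of the quadratic $f_1^\eta$ where the paper references the optimality condition directly; the content is identical.
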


\begin{proof}
Following the optimality condition of \eqref{def:xj} with $j=1$,
we have $x_0-x_1 = \eta f'(x_0) = \eta f'(y)$.
This identity and Lemma \ref{lem:tj}(a) with $j=1$ then imply that the lemma holds.
\end{proof}

We now conclude the iteration-complexity bound for Algorithm \ref{alg:PBS}. 

\begin{theorem}\label{thm:bundle}
Algorithm \ref{alg:PBS} takes $\tilde {\cal O}\left(\eta L_\alpha^{\frac{2}{\alpha+1}} \left(\frac{1}{\delta}\right)^{\frac{1-\alpha}{\alpha+1}} + 1\right)$ iterations to terminate.
\end{theorem}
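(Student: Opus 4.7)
The plan is to simply chain together Proposition~\ref{prop:tj}(b) with Lemma~\ref{lem:t1}, since all the real work (showing geometric contraction of $\delta_j$) has already been done. First I would recall from Proposition~\ref{prop:tj}(b) that the stopping iteration satisfies
\[
J \le j_0 = 1+\left\lceil \frac{1+\beta}{\beta}\log\!\left(\frac{\delta_1}{\delta}\right)\right\rceil,
\qquad
\beta = \frac{1}{2\eta}\left(\frac{\alpha+1}{L_\alpha}\right)^{\frac{2}{\alpha+1}}\delta^{\frac{1-\alpha}{\alpha+1}}.
\]
So the task reduces to estimating two quantities: the prefactor $(1+\beta)/\beta$ and the logarithm $\log(\delta_1/\delta)$.

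For the prefactor, I would write $(1+\beta)/\beta = 1 + 1/\beta$ and invert the definition of $\beta$ to obtain
\[
\frac{1}{\beta} = 2\eta\left(\frac{L_\alpha}{\alpha+1}\right)^{\frac{2}{\alpha+1}}\delta^{-\frac{1-\alpha}{\alpha+1}}.
\]
Thus $(1+\beta)/\beta = \mathcal{O}\!\left(\eta L_\alpha^{2/(\alpha+1)}\delta^{-(1-\alpha)/(\alpha+1)} + 1\right)$, which is exactly the leading factor appearing in the claimed bound.

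For the logarithm, I would invoke Lemma~\ref{lem:t1}, which gives $\delta_1 \le \frac{L_\alpha \eta^{\alpha+1}}{\alpha+1}\|f'(y)\|^{\alpha+1}$. Hence
\[
\log\!\left(\frac{\delta_1}{\delta}\right) \le \log\!\left(\frac{L_\alpha \eta^{\alpha+1}\|f'(y)\|^{\alpha+1}}{(\alpha+1)\delta}\right),
\]
which depends only logarithmically on $\eta$, $L_\alpha$, $\|f'(y)\|$, and $1/\delta$. Absorbing this logarithm into the $\tilde{\mathcal{O}}(\cdot)$ notation, multiplication with the prefactor yields the advertised bound
\[
J \le j_0 = \tilde{\mathcal{O}}\!\left(\eta L_\alpha^{\frac{2}{\alpha+1}}\left(\frac{1}{\delta}\right)^{\frac{1-\alpha}{\alpha+1}} + 1\right).
\]

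There is no real obstacle here; this is essentially bookkeeping on top of the recursion already established in Lemma~\ref{lem:tj}(b) and its consequence Proposition~\ref{prop:tj}. The only mildly delicate point is ensuring the additive ``$+1$'' term is carried properly so the bound remains nontrivial in the smooth limit $\alpha \to 1$, where $\beta$ may become large and $(1+\beta)/\beta \to 1$; the formulation $1/\beta + 1$ handles this cleanly without splitting into cases.
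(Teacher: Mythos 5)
Your proposal follows exactly the route the paper takes: the paper's own proof is the one-liner ``This theorem follows directly from Proposition~\ref{prop:tj} and Lemma~\ref{lem:t1},'' and you simply unpack the bookkeeping — expressing $(1+\beta)/\beta = 1 + 1/\beta$ with $1/\beta$ giving the leading $\eta L_\alpha^{2/(\alpha+1)}\delta^{-(1-\alpha)/(\alpha+1)}$ factor, and using Lemma~\ref{lem:t1} to control the $\log(\delta_1/\delta)$ term absorbed into $\tilde{\mathcal{O}}$. Correct and the same argument.
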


\begin{proof}
    This theorem follows directly from Proposition \ref{prop:tj} and Lemma \ref{lem:t1}.
\end{proof}

\subsection{Complexity for Hybrid Optimization}\label{subsec:composite-OPT}


This subsection is devoted to the complexity analysis of Algorithm \ref{alg:PBS} for solving \eqref{eq:subproblem} where $f$ is a hybrid function satisfying \eqref{ineq:composite}.
The following lemma is an analogue of Lemma \ref{lem:tj} and provides key recursive formulas for $\delta_j$, which is defined in \eqref{def:tj}.

\begin{lemma}\label{lem:tj1}
Assume $f$ is convex and satisfies \eqref{ineq:composite}. For $\delta>0$, define
\begin{equation}\label{def:M}
    M= \sum_{i=1}^n \frac{L_{\alpha_i}^{\frac{2}{\alpha_i+1}} }{ [(\alpha_i+1)\delta]^{\frac{1-\alpha_i}{\alpha_i+1}} }.
\end{equation}
Then, for every $j\ge 1$, the following statements hold:
\begin{itemize}
    \item[a)] $\delta_j \le \frac M2 \|x_j-x_{j-1}\|^2 + \sum_{i=1}^n(1-\alpha_i) \frac{\delta}2$; 
    \item[b)] $\left(1+\frac{1}{\eta M}\right)\left(\delta_{j+1} - \sum_{i=1}^n(1-\alpha_i)\frac{\delta}{2}\right)\le \delta_j- \sum_{i=1}^n(1-\alpha_i) \frac{\delta}{2}$.
\end{itemize}
\end{lemma}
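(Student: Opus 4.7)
The plan is to mirror Lemma~\ref{lem:tj} by swapping the single semi-smooth inequality \eqref{ineq:semi} for its composite analogue \eqref{ineq:hybrid}, and then collapsing each resulting $\|x_j-x_{j-1}\|^{\alpha_i+1}$ term into a quadratic plus a constant via Young's inequality. Concretely, for part (a) I would begin exactly as in Lemma~\ref{lem:tj}(a) and chain the definitions of $\delta_j$, $\tilde x_j$, and the cutting-plane $f_j$ at $x_{j-1}$ to get
\[
\delta_j \le f_y^\eta(x_j) - f_j^\eta(x_j) = f(x_j) - f_j(x_j) \le f(x_j) - f(x_{j-1}) - \inner{f'(x_{j-1})}{x_j - x_{j-1}}.
\]
Then \eqref{ineq:hybrid} with $(u,v)=(x_j,x_{j-1})$ yields $\delta_j \le \sum_{i=1}^n \tfrac{L_{\alpha_i}}{\alpha_i+1}\|x_j-x_{j-1}\|^{\alpha_i+1}$.

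The main work is to bound each summand by $\tfrac{m_i}{2}\|x_j-x_{j-1}\|^2 + \tfrac{(1-\alpha_i)\delta}{2}$ with coefficients $m_i$ that sum to $M$. For this I would apply Young's inequality $AB \le A^p/p + B^q/q$ with conjugate exponents $p = 2/(\alpha_i+1)$, $q = 2/(1-\alpha_i)$, writing $A = \lambda\|x_j-x_{j-1}\|^{\alpha_i+1}$ and $B = 1/\lambda$, and then tune $\lambda > 0$ so that the constant term lands precisely at $(1-\alpha_i)\delta/2$. A short calculation forces
\[
m_i = \frac{L_{\alpha_i}^{2/(\alpha_i+1)}}{[(\alpha_i+1)\delta]^{(1-\alpha_i)/(\alpha_i+1)}},
\]
whose sum over $i$ matches the definition of $M$ in \eqref{def:M}. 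The boundary case $\alpha_i = 1$ needs no Young argument since the term is already a pure quadratic with coefficient $L_1$, which coincides with the above formula; the case $\alpha_i = 0$ corresponds to ordinary Young with $p = q = 2$.

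For part (b) I would combine (a) applied at index $j+1$ with the recursion $\delta_{j+1} + \|x_{j+1}-x_j\|^2/(2\eta) \le \delta_j$ from Lemma~\ref{lem:bundle}(c). Writing $c := \sum_{i=1}^n (1-\alpha_i)\delta/2$, part (a) rearranges to $\|x_{j+1}-x_j\|^2 \ge (2/M)(\delta_{j+1}-c)$ (nontrivial only when $\delta_{j+1} \ge c$); substituting into the recursion and then subtracting $c$ from both sides yields $(1 + 1/(\eta M))(\delta_{j+1}-c) \le \delta_j - c$. If instead $\delta_{j+1} < c$, the left-hand side is nonpositive while $\delta_j - c \ge \delta_{j+1} - c$ by the monotonicity part of Lemma~\ref{lem:bundle}(c), so (b) holds trivially.

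The one place that requires care is pinning down the Young parameter in (a) so that the quadratic coefficients add up to exactly the $M$ prescribed by \eqref{def:M}; once that algebraic match is verified, (b) is a direct adaptation of the semi-smooth proof in Lemma~\ref{lem:tj}(b).
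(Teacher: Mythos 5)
Your proposal is correct and follows essentially the same route as the paper: chain the cutting-plane definitions to reduce part (a) to \eqref{ineq:hybrid}, then apply Young's inequality term by term with the conjugate pair $p=2/(\alpha_i+1)$, $q=2/(1-\alpha_i)$ and a parameter tuned so the constant lands at $(1-\alpha_i)\delta/2$, which forces exactly the coefficients $m_i$ summing to $M$; part (b) then comes from combining (a) at index $j+1$ with Lemma~\ref{lem:bundle}(c). The only cosmetic difference is the case split on $\delta_{j+1}<c$ in (b) — the paper avoids it because the inequality $\delta_{j+1}-c\le \tfrac{M}{2}\|x_{j+1}-x_j\|^2$ from (a) holds for all signs of the left side and can be substituted directly — but your extra care is harmless and the argument goes through.
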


\begin{proof}
a) Following a similar argument as in the proof of Lemma \ref{lem:tj}(a) with \eqref{ineq:semi} replaced by \eqref{ineq:hybrid}, we have
\begin{equation}\label{ineq:tj1}
    \delta_j \le \sum_{i=1}^n \frac{L_{\alpha_i}}{\alpha_i+1} \|u-v\|^{\alpha_i+1}.
\end{equation}
Using the Young's inequality $ab\le a^p/p + b^q/q$ with 
	\[
	a = \frac{L_\alpha}{(\alpha+1)\delta^{\frac{1-\alpha}{2}}}\|x_j-x_{j-1}\|^{\alpha+1}, \quad b= \delta^{\frac{1-\alpha}{2}}, \quad p= \frac{2}{\alpha+1}, \quad q= \frac{2}{1-\alpha}, 
	\]
	we obtain
	\[
	\frac{L_\alpha}{\alpha+1}\|x_j-x_{j-1}\|^{\alpha+1} \le
	\frac{L_\alpha^{\frac{2}{\alpha+1}} }{2 [(\alpha+1)\delta]^{\frac{1-\alpha}{\alpha+1}} }\|x_j-x_{j-1}\|^2
	+ \frac{(1-\alpha)\delta}{2}.
	\]
Combining the above inequality and \eqref{ineq:tj1}, and using the definition of $M$ in \eqref{def:M}, we prove the statement.

b) It immediately follows from (a) and the second inequality in Lemma~\ref{lem:bundle}(c) that
    \[
    \delta_{j+1} + \frac{1}{\eta M}\left(\delta_{j+1}-\sum_{i=1}^n(1-\alpha_i) \frac{\delta}{2}\right) \le \delta_{j+1} + \frac{1}{2\eta}\|x_{j+1}-x_j\|^2 \le \delta_j,
    \]
    and hence the statement follows.
\end{proof}

The following lemma gives an upper bound on $\delta_1$ similar to Lemma \ref{lem:t1}.
	
\begin{lemma}\label{lem:t1-hybrid}
For a given $y\in \R^d$, we have
\[
\delta_1 \le \sum_{i=1}^n \frac{L_{\alpha_i}}{\alpha_i+1} \|f'(y)\|^{\alpha_i+1}.
\]
\end{lemma}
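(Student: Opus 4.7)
The plan is to mimic the proof of Lemma \ref{lem:t1} almost verbatim, replacing the single-exponent semi-smoothness estimate by its composite counterpart. The two ingredients used there were (i) the optimality condition of the $j=1$ subproblem \eqref{def:xj}, which produces a closed-form expression for $x_1 - x_0$, and (ii) a per-iteration bound on $\delta_j$ in terms of $\|x_j - x_{j-1}\|^{\alpha_i+1}$. Both ingredients transfer directly to the composite setting.

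First, I would read off the optimality condition for \eqref{def:xj} at $j=1$. At that stage $f_1(x) = f(x_0) + \inner{f'(x_0)}{x - x_0}$ is affine, so setting the gradient of $f_1^\eta$ to zero gives $f'(x_0) + (x_1 - y)/\eta = 0$; combined with $x_0 = y$, this yields $x_0 - x_1 = \eta f'(y)$, exactly as in the semi-smooth case.

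Second, I would invoke the composite analogue of Lemma \ref{lem:tj}(a). This is precisely the intermediate inequality \eqref{ineq:tj1} that appears inside the proof of Lemma \ref{lem:tj1}(a), obtained via the chain $\delta_j \le f_y^\eta(x_j) - f_j^\eta(x_j) = f(x_j) - f_j(x_j) \le f(x_j) - f(x_{j-1}) - \inner{f'(x_{j-1})}{x_j - x_{j-1}}$ followed by one application of \eqref{ineq:hybrid}. Specializing \eqref{ineq:tj1} at $j=1$ and substituting $\|x_1 - x_0\| = \eta \|f'(y)\|$ immediately produces the claimed bound (with the expected $\eta^{\alpha_i+1}$ factors, in parallel with Lemma \ref{lem:t1}).

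There is no real obstacle here; the whole argument is the same two-line computation that proved Lemma \ref{lem:t1}, with a single application of \eqref{ineq:hybrid} replacing \eqref{ineq:semi}. The only subtlety is picking the right intermediate estimate from the proof of Lemma \ref{lem:tj1}(a) — namely \eqref{ineq:tj1} itself, \emph{before} the Young's-inequality splitting that converts $\|x_j - x_{j-1}\|^{\alpha_i+1}$ into a $\|x_j - x_{j-1}\|^2$ term plus a $\delta$ term for use in the recursion of Lemma \ref{lem:tj1}(b). Quoting the raw form rather than the dualized form is what produces a $j=1$ bound purely in terms of $\|f'(y)\|^{\alpha_i+1}$.
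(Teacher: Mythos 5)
Your proof is correct and follows exactly the same route as the paper's (the paper's proof for this lemma is a one-liner, ``follows from a similar argument as in the proof of Lemma~\ref{lem:t1}''): the $j=1$ optimality condition gives $x_0-x_1 = \eta f'(y)$, and then the raw composite bound \eqref{ineq:tj1} — applied before the Young's-inequality dualization — is specialized at $j=1$. You also correctly observe that this argument actually yields $\delta_1 \le \sum_{i=1}^n \frac{L_{\alpha_i}\eta^{\alpha_i+1}}{\alpha_i+1}\|f'(y)\|^{\alpha_i+1}$, i.e., the stated lemma as printed appears to have dropped the $\eta^{\alpha_i+1}$ factors that are present in its semi-smooth counterpart Lemma~\ref{lem:t1}; your version is what the argument actually proves.
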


\begin{proof}
This lemma follows from a similar argument as in the proof of Lemma~\ref{lem:t1}.
\end{proof}

The following proposition is the key result in establishing the iteration-complexity of Algorithm~\ref{alg:PBS}.

\begin{proposition}\label{prop:null} 
		We have $\delta_j\le \delta$, for every $j$ such that
		\begin{equation}\label{ineq:j-hybrid}
		    j \ge (1+ \eta M ) \log\left( \frac{2  \delta_1}{\delta}\right).
		\end{equation}
	\end{proposition}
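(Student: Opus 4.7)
The approach is to iterate the one-step contraction from Lemma \ref{lem:tj1}(b). Writing $s := \sum_{i=1}^n (1-\alpha_i)\delta/2$, part (b) rearranges to $\delta_{j+1} - s \le \tfrac{\eta M}{1+\eta M}(\delta_j - s)$, so a straightforward induction on $j$ gives the geometric bound
\[
\delta_j - s \;\le\; \left(\frac{\eta M}{1+\eta M}\right)^{\!j-1}(\delta_1 - s),
\]
and (since $s\ge 0$) dropping the $-s$ on the right yields $\delta_j \le s + \bigl(\tfrac{\eta M}{1+\eta M}\bigr)^{j-1}\delta_1$.

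Next I would convert the geometric factor to an exponential one via the elementary inequality $1-y\le e^{-y}$ applied at $y = 1/(1+\eta M)$, which gives
\[
\frac{\eta M}{1+\eta M} \;=\; 1 - \frac{1}{1+\eta M} \;\le\; \exp\!\left(-\frac{1}{1+\eta M}\right).
\]
Plugging this into the previous display yields the clean bound
\[
\delta_j \;\le\; s + \exp\!\left(-\frac{j-1}{1+\eta M}\right)\delta_1.
\]

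To conclude $\delta_j\le \delta$, I would split the budget $\delta$ into two halves: the constant part $s\le \delta/2$ (this is the regime in which Lemma \ref{lem:tj1}(a) is set up, baked into the Young-inequality step in its proof), and the decaying part $\exp(-(j-1)/(1+\eta M))\,\delta_1 \le \delta/2$. The latter rearranges to $j - 1 \ge (1+\eta M)\log(2\delta_1/\delta)$, which is exactly \eqref{ineq:j-hybrid} (up to an additive $1$ that can be absorbed or dropped).

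The main step is simply the recursive unrolling of Lemma \ref{lem:tj1}(b); everything else is arithmetic. The one place that needs care is the factor $2$ inside the logarithm, which arises from the $1/2 + 1/2$ split of the target $\delta$ between the constant shift $s$ and the geometric remainder.
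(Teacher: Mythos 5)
Your proof follows the paper's argument step for step: unroll the contraction in Lemma~\ref{lem:tj1}(b) to get geometric decay of $\delta_j$ toward the shift $s = \sum_{i=1}^n (1-\alpha_i)\delta/2$, convert $\tau^{j-1}$ with $\tau = \eta M/(1+\eta M)$ to an exponential via $\tau \le e^{\tau-1}$, and read off the iteration threshold; this is exactly the paper's proof, including the harmless $j$ vs.\ $j-1$ slack you note. One caveat on your parenthetical: the bound $s \le \delta/2$ is \emph{not} ``baked into the Young-inequality step'' --- Young's inequality in Lemma~\ref{lem:tj1}(a) produces exactly $s = \sum_i(1-\alpha_i)\delta/2$, which exceeds $\delta/2$ whenever $\sum_i(1-\alpha_i) > 1$ (for instance two Lipschitz components with $\alpha_1=\alpha_2=0$); this is an unstated restriction that the paper's own proof also glosses over when it passes from the geometric bound to ``$\delta_j\le\delta$,'' so your argument is faithful to theirs, but the justification you offered for that step is not right.
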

    
	\begin{proof}
        Let 
        \begin{equation}\label{def:tau}
            \tau = \frac{\eta M}{1+\eta M},
        \end{equation}
        then Lemma \ref{lem:tj1}(b) becomes
        \[
        \delta_{j+1} - \sum_{i=1}^n(1-\alpha_i)\frac{\delta}{2} \le \tau \left(t_{j} - \sum_{i=1}^n(1-\alpha_i)\frac{\delta}{2}\right).
        \]
	    Using the above inequality repeatedly and the fact that $\tau \le \exp(\tau-1)$, we have for every $j \ge 1$,
		\[
		\delta_j - \frac{(1-\alpha)\delta}2 \le \tau^{j-1}\left(\delta_1 - \frac{(1-\alpha)\delta}2 \right) \le \tau^{j-1}\delta_1 \le \exp\{(\tau-1) (j-1)\} \delta_1. 
		\]
		Hence, it is easy to see that $\delta_j\le \delta$ if $j\ge \frac{1}{1-\tau} \log\left( \frac{2  \delta_1}{\delta}\right)$.
		Using the definition of $\tau$ in \eqref{def:tau}, we have if $j$ is as in \eqref{ineq:j-hybrid}, then $\delta_j\le \delta$.
    \end{proof}

We are ready to present the complexity bound for Algorithm \ref{alg:PBS}. 

\begin{theorem}\label{thm:bundle-hybrid}
Algorithm \ref{alg:PBS} takes $\tilde {\cal O}\left(\eta M + 1\right)$ iterations to terminate, where $M$ is as in \eqref{def:M}.
\end{theorem}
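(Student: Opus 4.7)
The plan is straightforward: Theorem~\ref{thm:bundle-hybrid} is essentially a direct consequence of Proposition~\ref{prop:null} combined with the upper bound on $\delta_1$ from Lemma~\ref{lem:t1-hybrid}. Since Proposition~\ref{prop:null} already establishes that $\delta_j \le \delta$ whenever $j \ge (1+\eta M)\log(2\delta_1/\delta)$, and the termination criterion of Algorithm~\ref{alg:PBS} triggers precisely when $\delta_j \le \delta$, the iteration count $J$ at termination is bounded by the ceiling of this quantity.

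The only remaining step is to verify that the logarithmic factor $\log(2\delta_1/\delta)$ is genuinely polylogarithmic in the problem parameters, so that it gets absorbed into the $\tilde{\cal O}(\cdot)$ notation. By Lemma~\ref{lem:t1-hybrid}, we have
\[
\delta_1 \;\le\; \sum_{i=1}^n \frac{L_{\alpha_i}}{\alpha_i+1}\,\|f'(y)\|^{\alpha_i+1},
\]
so $\log(2\delta_1/\delta)$ depends only logarithmically on $\eta$, the constants $L_{\alpha_i}$, the subgradient norm $\|f'(y)\|$, and $1/\delta$. Hiding these logarithmic dependencies inside $\tilde{\cal O}(\cdot)$, the iteration count is therefore bounded by $\tilde{\cal O}(\eta M + 1)$, exactly as claimed.

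There is no real obstacle here, since the two supporting results have already done the substantive work: Proposition~\ref{prop:null} did the contraction/recursion argument using the strong-convexity-based recursion of Lemma~\ref{lem:bundle}(c) together with the Young's-inequality bound of Lemma~\ref{lem:tj1}(a), and Lemma~\ref{lem:t1-hybrid} supplied the initial bound. The proof of the theorem is thus essentially a one-line citation of these two results, with the observation that the $+1$ term in $\tilde{\cal O}(\eta M + 1)$ handles the regime $\eta M \to 0$ in which the log factor alone dictates the complexity.
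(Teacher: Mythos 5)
Your proof is correct and follows the same route as the paper: the paper's proof is itself a one-line citation of Proposition~\ref{prop:null} and Lemma~\ref{lem:t1-hybrid}, and you simply spell out why those two results give the bound, including the observation that the $\log(2\delta_1/\delta)$ factor is absorbed into $\tilde{\cal O}(\cdot)$.
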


\begin{proof}
    This theorem follows directly from Proposition \ref{prop:null} and Lemma \ref{lem:t1-hybrid}.
\end{proof}

\subsection{Implementation of Algorithm~\ref{alg:PBS}}

This subsection presents the simulation results of Algorithm~\ref{alg:PBS} on solving the regularized subproblem \eqref{eq:subproblem} for two objective functions $f$: quadratic programming (QP) and $\ell_p$ regression.
In both cases, the subgradient $f'$ is computed by automatic differentiation via Zygote.jl~\cite{innes2018don}, and the subproblem \eqref{def:xj} is reformulated as a QP and solved using Clarabel.jl~\cite{goulart2024clarabel}. 
Numerical simulations are conducted on an i9-13900k desktop with 64 GB of RAM

\textit{Quadratic Programming}
We first consider the unconstrained QP problem 
\[f(x)=\frac{1}{2}x^\top Q x + \inner{c}{x}\]
where $Q\in \mathbb{S}^{d}_+$ and $c\in\R^d$. We generate $Q = AA^\top/\|AA^\top\|_\infty$
where $A\in\R^{d\times d}$ has normally distributed entries and $\|AA^\top\|_\infty=\max_{ij}|(AA^\top)_{ij}|$ is the entrywise infinity norm. The linear term $c$ and point $y$ are also entrywise normally distributed. The dimension $d$ is set to be $1000$. 

Note that \eqref{eq:subproblem} for QP has the closed-form solution
\[
   \underset{x\in\R^n}\argmin\left\{\frac{1}{2} x^\top Q x + \inner{c}{x} + \frac{1}{2\eta}\|x-y\|^2\right\}=(Q + \eta^{-1} I)^{-1}(\eta^{-1} y-c),
\]
hence we can compare the progress of Algorithm~\ref{alg:PBS} against the true minimum. We run Algorithm~\ref{alg:PBS} until the condition $\delta_j<10^{-6}$ is satisfied. Fig.~\ref{fig:qp_experiment} shows the function value decrease of the minimum value iterate $f_y^\eta(\tx_j)$ versus the optimal value $f_y^\eta(x^*)$ with varying $\eta$.
Noting that Algorithm~\ref{alg:PBS} requires more iterations as $\eta$ increases, this observation is consistent with the complexity bound (proportional to $\eta$) stated in Theorem~\ref{thm:bundle}.

\begin{figure}
    \centering
    \includegraphics[width=0.5\linewidth]{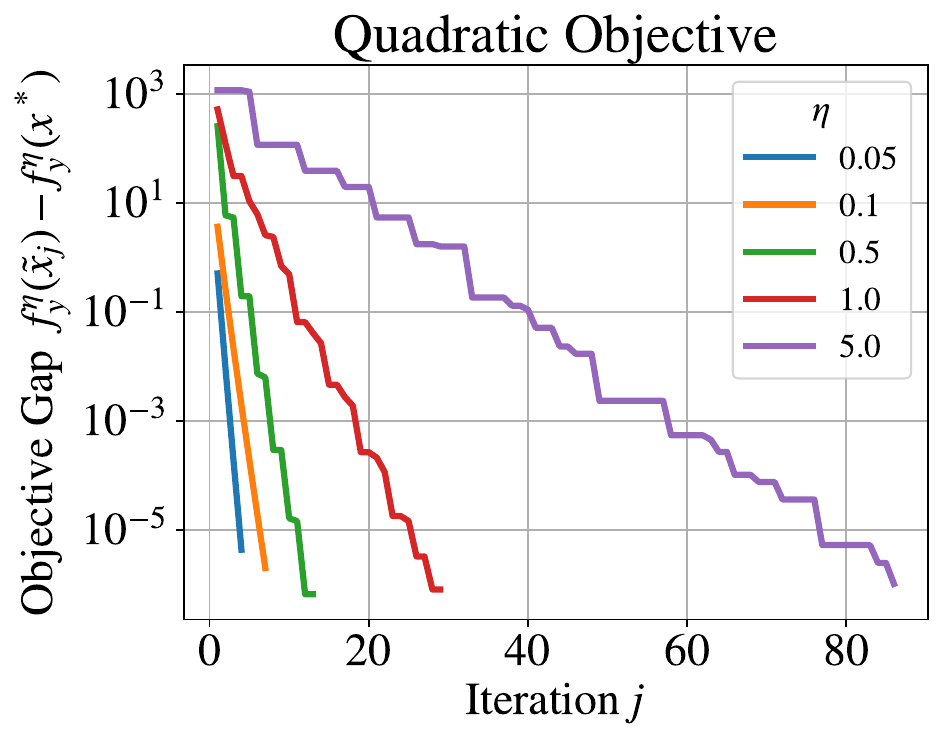}
    \caption{Proximal subproblem progress of Algorithm~\ref{alg:PBS} in quadratic programming. }
    \label{fig:qp_experiment}
\end{figure}

\textit{$\ell_p$ Regression}
We next consider $\ell_p$ regression, where the objective $f$ is of the form
\[
    f(x) = \|Ax-b\|_p^p,
\]
where $A\in \R^{n\times d}$ and $b\in\R^n$ have normally distributed entries and $A$ is again divided by its entrywise infinity norm. We set $d=100$ and $n=500$ for testing. The point $y\in \R^d$ is entrywise normally distributed, and is identical for all $p$ values tested. Algorithm~\ref{alg:PBS} is terminated when $\delta_j<10^{-6}$. Fixing $\eta=1.0$, Fig.~\ref{fig:lp_regression} shows the trajectory of the gap $\delta_j$ for varying $p$ values.

\begin{figure}[H]
    \centering
    \includegraphics[width=0.5\linewidth]{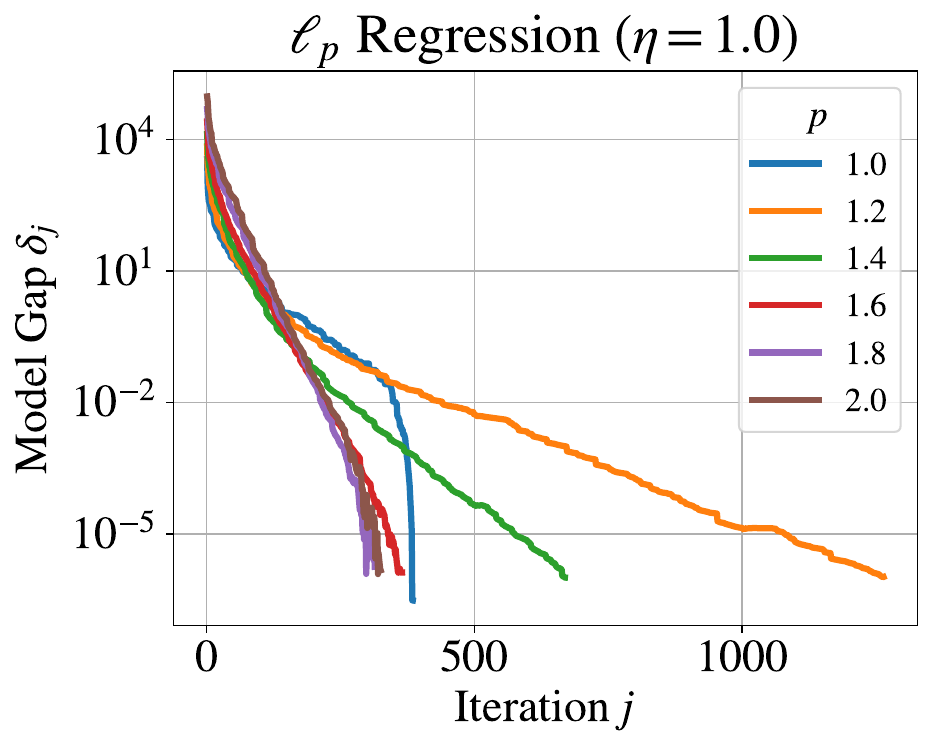}
    \caption{Proximal subproblem progress of Algorithm~\ref{alg:PBS} in $\ell_p$ regression. }
    \label{fig:lp_regression}
\end{figure}

\section{Adaptive Proximal Bundle Method}\label{sec:APBM}

As discussed in Section \ref{sec:opt}, the cutting-plane method (i.e., Algorithm \ref{alg:PBS}) is widely used in the proximal bundle method as a subroutine to repeatedly solve the proximal subproblem \eqref{eq:prox-sub}.
Since the proximal bundle method uses a more accurate cutting-plane model $f_j$ rather than a linearization as an approximation of the objective function $f$, it generalizes the subgradient method and is able to work with weaker regularization, namely larger stepsize $\eta$. This explains why both methods have optimal complexity bounds \cite{liang2024unified,liang2020proximal}, but the proximal bundle method is always more efficient in practice.

For both the subgradient method and the proximal bundle method to have the optimal performance, one needs to carefully select the stepsize $\eta$, namely, being small enough for the subgradient method and within a certain (but relatively large) range for the proximal bundle method. In both cases, we need to know the problem-dependent parameters such as $\alpha$ and $L_\alpha$, which are unknown or hard to estimate in practice.
In this section, we develop the APBM based on an adaptive stepsize strategy for \eqref{eq:prox-sub} and using Algorithm \ref{alg:PBS} to solve each subproblem \eqref{eq:prox-sub}.
We also discuss variants of adaptive subgradient methods and compare them with APBM.
For simplicity, we only present the analysis of Hölder smooth functions satisfying \eqref{ineq:semi-smooth}, while the hybrid functions satisfying \eqref{ineq:composite} can be similarly analyzed using results from Subsection~\ref{subsec:composite-OPT}.

From practical observations \cite{liang2024single}, the proximal bundle method works well when the number of inner iterations (i.e., those of Algorithm \ref{alg:PBS} to solve \eqref{eq:prox-sub}) stays as a constant much larger than 1 (i.e., that of the subgradient method), say $10$. Recall from Theorem \ref{thm:bundle} that inner complexity is $\tilde {\cal O}\left(\eta L_\alpha^{\frac{2}{\alpha+1}} \left(\frac{1}{\delta}\right)^{\frac{1-\alpha}{\alpha+1}} + 1\right)$. Since we do not know $\alpha$ and $L_\alpha$, we cannot choose a constant stepsize $\eta$ so that the number of inner iterations is close to a desired number such as $10$. Hence, an adaptive stepsize rule is indeed needed.

By carefully examining Proposition \ref{prop:tj} and Theorem \ref{thm:bundle}, we find that the inner complexity is $\tilde {\cal O}(\beta^{-1}+1)$ where $\beta$ is as in \eqref{def:beta}. Suppose we want to prescribe the number of inner iterations to be close to $\beta_0^{-1}$ for some $\beta_0\in(0,1]$, if $\beta_0 \le \beta$, then by Proposition \ref{prop:tj}(a), we have
\begin{equation}\label{ineq:test}
(1+\beta_0) \delta_{j} \le \delta_{j-1}.
\end{equation}
Hence, it suffices to begin with a relatively large $\eta$, check \eqref{ineq:test} to determine whether the $\eta$ is small enough (i.e., $\beta$ is large enough), and adjust $\eta$ (if necessary) by progressively halving it.

Algorithm \ref{alg:PB} below is a formal statement of APBM based on the above intuition.

\begin{algorithm}[H]
	\caption{Adaptive Proximal Bundle Method} 
	\label{alg:PB}
	\begin{algorithmic}
	\REQUIRE Let $y_0\in \R^d$, $\eta_0>0$, $\beta_0\in(0,1]$, and $\varepsilon>0$  be given.
    \FOR{$k=1,2,\cdots$}
        \STATE 
		Call Algorithm \ref{alg:PBS} with $(y,\eta,\delta)=(y_{k-1},\eta_{k-1},\varepsilon/2)$ and output $(y_k,\tilde y_k)=(x_J,\tilde x_J)$.
        \IF {\eqref{ineq:test}
        is always true in the execution of Algorithm \ref{alg:PBS},}
            \STATE set $\eta_k=\eta_{k-1}$;
        \ELSE
            \STATE set $\eta_k=\eta_{k-1}/2$.
        \ENDIF
        \ENDFOR
	\end{algorithmic}
\end{algorithm}

The following lemma provides basic results of Algorithm \ref{alg:PBS} and is the starting point of the analysis of APBM.

\begin{lemma}\label{lem:APBM}
Assume $f$ is convex and $L_\alpha$-Hölder smooth. The following statements hold for APBM:
\begin{itemize}
    \item[a)] for every $k\ge 1$ and $u\in \R^d$, we have
    \begin{equation}\label{ineq:iteration}
        2\eta_{k-1}[f(\tilde y_k) - f(u)] \le \|y_{k-1}-u\|^2 - \|y_k - u\|^2 + \eta_{k-1} \varepsilon;
    \end{equation}
    \item[b)] for any $k\ge 1$, if 
    \begin{equation}\label{ineq:eta}
        \eta_{k-1} \le \frac{1}{2\beta_0} \left(\frac{\alpha+1}{L_\alpha} \right)^{\frac{2}{\alpha+1}} \left(\frac{\varepsilon}{2}\right)^{\frac{1-\alpha}{\alpha+1}},
    \end{equation}
then $\eta_k=\eta_{k-1}$;
    \item[c)] $\{\eta_k\}$ is a non-increasing sequence;
    \item[d)] for every $k\ge 0$, 
        \begin{equation}\label{ineq:etak}
        \eta_k \ge \underline{\eta}:= \min \left \lbrace \frac{1}{4\beta_0} \left(\frac{\alpha+1}{L_\alpha} \right)^{\frac{2}{\alpha+1}} \left(\frac{\varepsilon}{2}\right)^{\frac{1-\alpha}{\alpha+1}}, \eta_0 \right\rbrace.
        \end{equation}
\end{itemize}
\end{lemma}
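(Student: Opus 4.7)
The plan is to dispatch the four parts in order, with (d) building on (b) and (c). All four statements reduce to specializations of earlier results applied to the outer iterates produced by Algorithm \ref{alg:PB}.

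For part (a), I would apply Lemma \ref{lem:bundle}(d) to the inner call at outer step $k$, where $y=y_{k-1}$, $\eta=\eta_{k-1}$, $\delta=\varepsilon/2$, and the returned pair is $(x_J,\tilde x_J)=(y_k,\tilde y_k)$. This gives
\[
f_{y_{k-1}}^{\eta_{k-1}}(\tilde y_k)-f_{y_{k-1}}^{\eta_{k-1}}(u)\le \frac{\varepsilon}{2}-\frac{1}{2\eta_{k-1}}\|y_k-u\|^2
\]
for every $u\in\R^d$. Expanding $f_{y_{k-1}}^{\eta_{k-1}}$ by its definition \eqref{eq:subproblem}, dropping the nonnegative term $\|\tilde y_k-y_{k-1}\|^2/(2\eta_{k-1})$ from the left side, and multiplying through by $2\eta_{k-1}$ yields \eqref{ineq:iteration}.

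For part (b), the key observation is that hypothesis \eqref{ineq:eta} is a direct rewriting of $\beta\ge\beta_0$, where $\beta$ is the quantity from \eqref{def:beta} with $\eta=\eta_{k-1}$ and $\delta=\varepsilon/2$. Proposition \ref{prop:tj}(a) then says that at every inner iteration $j$ with $\delta_j>\varepsilon/2$ we have $(1+\beta)\delta_j\le\delta_{j-1}$, which is stronger than test \eqref{ineq:test} since $\beta\ge\beta_0$. Because Algorithm \ref{alg:PBS} terminates as soon as $\delta_j\le\varepsilon/2$, the test holds at every iteration where it is checked, so step 3 of Algorithm \ref{alg:PB} keeps $\eta_k=\eta_{k-1}$.

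Part (c) is immediate from step 3, which forces $\eta_k\in\{\eta_{k-1},\eta_{k-1}/2\}$. For part (d), I would argue by contradiction. Suppose $\eta_k<\underline{\eta}$ for some $k$, and take $k\ge 1$ minimal (the base case $\eta_0\ge\underline{\eta}$ holds since $\underline{\eta}$ is defined as a minimum that includes $\eta_0$). Then $\eta_{k-1}\ge\underline{\eta}>\eta_k$, which by step 3 forces $\eta_k=\eta_{k-1}/2$, so
\[
\eta_{k-1}=2\eta_k<2\underline{\eta}\le \frac{1}{2\beta_0}\left(\frac{\alpha+1}{L_\alpha}\right)^{\frac{2}{\alpha+1}}\left(\frac{\varepsilon}{2}\right)^{\frac{1-\alpha}{\alpha+1}}.
\]
Hence $\eta_{k-1}$ satisfies \eqref{ineq:eta}, and by part (b) we must have $\eta_k=\eta_{k-1}$, contradicting $\eta_k=\eta_{k-1}/2$. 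The main obstacle is a minor ambiguity in part (b) about whether test \eqref{ineq:test} is also evaluated at the terminating inner iteration $j=J$ (where $\delta_J\le\varepsilon/2$, so Proposition \ref{prop:tj}(a) is not directly applicable); I would resolve this by interpreting ``always true'' in step 3 as applying only to iterations $j\ge 2$ at which Algorithm \ref{alg:PBS} does not terminate, which is the natural interpretation consistent with the rest of the development.
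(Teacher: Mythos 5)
Your proof is correct and follows the same route as the paper's: Lemma~\ref{lem:bundle}(d) specialized to the inner call for (a), the observation that \eqref{ineq:eta} is exactly $\beta \ge \beta_0$ combined with Proposition~\ref{prop:tj}(a) for (b), and the halving rule in step~3 for (c) and (d), with your part~(d) simply spelling out the minimal-counterexample argument the paper compresses into ``immediately follows from (b).'' Your observation that Proposition~\ref{prop:tj}(a) does not directly cover the terminating inner iteration $j=J$ (where $\delta_J\le\varepsilon/2$) is a genuine subtlety the paper's one-line proof of~(b) glosses over, and your resolution — that the test \eqref{ineq:test} is meant to be checked only at non-terminating inner iterations — is the reading the authors must intend for the argument to close.
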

\begin{proof}
    a) It follows from Lemma \ref{lem:bundle}(d) that for every $u \in \R^d$
    \[
    2\eta[f(\tilde x_J) - f(x)] \le 2\eta\delta + \|u-y\|^2 - \|x_J - u\|^2.
    \]
    Noting from step 2 of Algorithm \ref{alg:PB} that $(\delta,\eta,y,x_J,\tilde x_J)=(\varepsilon/2,\eta_{k-1},y_{k-1},y_k,\tilde y_k)$, which together with the above inequality, implies that \eqref{ineq:iteration} holds.
    
    b) It follows from Proposition \ref{prop:tj}(a) and \eqref{ineq:eta} that \eqref{ineq:test} always holds in the execution of Algorithm \ref{alg:PBS}. In view of step 3 of Algorithm \ref{alg:PB}, there holds $\eta_k=\eta_{k-1}$.

    c) This statement clearly follows from step 3 of Algorithm \ref{alg:PB}.

    d) This statement immediately follows from (b) and step 3 of Algorithm~\ref{alg:PB}.
\end{proof}

The following theorem gives the total iteration-complexity of APBM. 

\begin{theorem}
    Assume $f$ is convex and $L_\alpha$-Hölder smooth. If $\eta_0\le \|y_0-x_*\|^2/\varepsilon$, then the iteration-complexity to obtain an $\varepsilon$-solution to \eqref{eq:opt} (i.e., a point $\hat x$ such that $f(\hat x)-\min_{x\in \R^d} f(x) \le \varepsilon$) is given by
    \begin{equation}\label{bound:total}
        \tilde {\cal O} \left( \frac{L_\alpha^{\frac{2}{\alpha+1}} \|y_0-x_*\|^2}{\varepsilon^{\frac{2}{\alpha+1}}}
        + \eta_0 L_\alpha^{\frac{2}{\alpha+1}} \left(\frac{1}{\varepsilon}\right)^{\frac{1-\alpha}{\alpha+1}}
        \log\left(\frac{\eta_0}{\underline{\eta}}\right) + 1 \right)
    \end{equation}
    where $\underline{\eta}$ is as in \eqref{ineq:etak}.
\end{theorem}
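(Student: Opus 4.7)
The plan is to chain Lemma \ref{lem:APBM}(a) (the per-outer-iteration descent inequality) with Theorem \ref{thm:bundle} (the per-subproblem inner-iteration bound), and then carefully track the at most logarithmic number of stepsize halvings. First I would sum \eqref{ineq:iteration} with $u=x_*$ over $k=1,\dots,K$ and telescope to obtain
\[
2\sum_{k=1}^K \eta_{k-1}\bigl[f(\tilde y_k)-f(x_*)\bigr] \le \|y_0-x_*\|^2 + \varepsilon\sum_{k=1}^K \eta_{k-1}.
\]
Dividing by $2\sum_{k=1}^K\eta_{k-1}$ and lower bounding the weighted average by the minimum gives
\[
\min_{1\le k\le K}\bigl[f(\tilde y_k)-f(x_*)\bigr] \le \frac{\|y_0-x_*\|^2}{2\sum_{k=1}^K \eta_{k-1}} + \frac{\varepsilon}{2}.
\]
Hence $\min_k f(\tilde y_k)-f(x_*)\le \varepsilon$ as soon as $\sum_{k=1}^K\eta_{k-1}\ge \|y_0-x_*\|^2/\varepsilon$; taking $K$ as the first such index and using the monotonicity of $\{\eta_k\}$ from Lemma \ref{lem:APBM}(c) together with the hypothesis $\eta_0\le \|y_0-x_*\|^2/\varepsilon$, I would conclude $\sum_{k=1}^K\eta_{k-1}\le \|y_0-x_*\|^2/\varepsilon+\eta_0={\cal O}(\|y_0-x_*\|^2/\varepsilon)$.

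Next I would invoke Theorem \ref{thm:bundle} with $\delta=\varepsilon/2$ to bound the inner iterations of each call to Algorithm \ref{alg:PBS} by $\tilde{\cal O}(\eta_{k-1}L_\alpha^{2/(\alpha+1)}\varepsilon^{-(1-\alpha)/(\alpha+1)}+1)$, so that the aggregate inner count becomes
\[
\tilde{\cal O}\Bigl(L_\alpha^{2/(\alpha+1)}\varepsilon^{-(1-\alpha)/(\alpha+1)}\sum_{k=1}^K\eta_{k-1} \;+\; K\Bigr).
\]
Substituting the bound on $\sum\eta_{k-1}$ immediately yields the first term of \eqref{bound:total}; moreover, by Lemma \ref{lem:APBM}(d) one has $K\le \sum_{k=1}^K\eta_{k-1}/\underline{\eta}={\cal O}(\|y_0-x_*\|^2/(\varepsilon\underline{\eta}))$, and substituting \eqref{ineq:etak} for $\underline{\eta}$ shows this is absorbed into the first term of \eqref{bound:total}.

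The logarithmic factor in the second term of \eqref{bound:total} would come from a separate accounting of the ``bad'' outer iterations in which the stepsize is halved. By step 3 of Algorithm \ref{alg:PB} together with Lemma \ref{lem:APBM}(c,d), such halvings occur at most $\lceil\log_2(\eta_0/\underline{\eta})\rceil$ times in total, and each bad iteration pays at most the worst-case inner cost $\tilde{\cal O}(\eta_0 L_\alpha^{2/(\alpha+1)}\varepsilon^{-(1-\alpha)/(\alpha+1)}+1)$ (using $\eta_{k-1}\le \eta_0$). Summing over these halvings produces the extra $\log(\eta_0/\underline{\eta})$ factor attached to the second term of \eqref{bound:total}, together with the residual constant $+1$.

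The main obstacle is the dual accounting required in these last two steps: one must simultaneously bound the convergence-driven contribution $\sum_{k=1}^K\eta_{k-1}$ (essentially $\|y_0-x_*\|^2/\varepsilon$) and the warm-up overhead coming from the at most $\log_2(\eta_0/\underline{\eta})$ stepsize halvings, each of which may incur inner cost on the order of $\eta_0 L_\alpha^{2/(\alpha+1)}\varepsilon^{-(1-\alpha)/(\alpha+1)}$ because $\eta_{k-1}$ can still be as large as $\eta_0$ during those iterations. The hypothesis $\eta_0\le\|y_0-x_*\|^2/\varepsilon$ is used precisely to ensure that the $K$ contribution in the aggregate bound does not dominate the first term of \eqref{bound:total}, so that the whole complexity collapses into the form claimed.
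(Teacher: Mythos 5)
Your overall strategy matches the paper's: chain Lemma \ref{lem:APBM}(a) (summed/telescoped) with the per-call bound from Theorem \ref{thm:bundle}, and count stepsize halvings via Lemma \ref{lem:APBM}(c,d). However, there are two points worth flagging. First, your steps that aggregate the inner cost over all $K$ cycles as $\tilde{\cal O}\bigl(L_\alpha^{2/(\alpha+1)}\varepsilon^{-(1-\alpha)/(\alpha+1)}\sum_{k=1}^K\eta_{k-1} + K\bigr)$ and then bound $\sum_{k=1}^K\eta_{k-1}\lesssim\|y_0-x_*\|^2/\varepsilon$ and $K\lesssim\sum\eta_{k-1}/\underline\eta$ \emph{already} cover every cycle, including the warm-up cycles where $\eta$ is halved. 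So the ``separate accounting of the bad outer iterations'' in your last step double-counts those cycles; it is redundant and, if you retain it, you should be explicit that it only adds an over-estimate rather than a distinct contribution. (In fact, your aggregate argument is slightly cleaner than the paper's, which splits the cycle index at the stabilization time $k_0$ and charges the first $k_0$ cycles at the worst-case cost $\eta_0$ each; this split is precisely what produces the paper's second, $\log(\eta_0/\underline\eta)$-weighted term, whereas your aggregation absorbs it.) Second, a genuine gap: to apply Theorem \ref{thm:bundle} uniformly across calls, the $\tilde{\cal O}$ there hides a factor $\log(\delta_1/\delta)$, and by Lemma \ref{lem:t1} $\delta_1$ depends on $\|f'(y_{k-1})\|$. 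You must argue that the iterates $\{y_k\}$ stay bounded (the paper does this via an inequality like \eqref{ineq:dist}, obtained from the same telescoped sum you wrote down) so that the per-call inner bound has uniform hidden constants; without that the aggregate $\sum_k\tilde{\cal O}(\cdot)=\tilde{\cal O}(\sum_k\cdot)$ step is not justified. Adding that short boundedness argument would make your proof complete.
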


\begin{proof}
Noting from Lemma \ref{lem:APBM} (d) that $\eta_k$ is bounded from below, we know there exists some $\tilde \eta \in [\underline{\eta}, \eta_0]$ such that for some $k_0\ge 1$, $\eta_k\equiv \tilde \eta$ for $k\ge k_0$. Thus, it follows from the assumption that $\eta_0\le \|y_0-x_*\|^2/\varepsilon$ that
    \begin{equation}\label{cond:eta}
        \underline{\eta} \le \tilde \eta \le \frac{\|y_0-x_*\|^2}{\varepsilon}.
    \end{equation}
We consider the worst-case scenario where APBM keeps halving the stepsize until it is stable at $\tilde \eta$ and the convergence relies on the conservative stepsize $\tilde \eta$.
Summing \eqref{ineq:iteration} from $k=1$ to $n$, we have
    \begin{align*}
        2 \sum_{k=1}^n \eta_{k-1} \left(\min_{1\le k \le n} f(\tilde y_k) - f(u)\right) &\le 2 \sum_{k=1}^n \eta_{k-1}[f(\tilde y_k) - f(u)] \\
        &\le \|y_0-u\|^2 - \|y_n-u\|^2 + \varepsilon \sum_{k=1}^n\eta_{k-1}.
    \end{align*}
The above inequality with $u=x_*$, the fact that $\eta_k \le \eta_0$, and the assumption that $\eta_k\equiv \tilde \eta$ for $k\ge k_0$ imply that
\begin{equation}\label{ineq:dist}
    \|y_n-x_*\|^2 \le \|y_0-x_*\|^2 + n \eta_0 \varepsilon 
\end{equation}
and
\[
\min_{1\le k \le n} f(\tilde y_k) - f_* \le \frac{\|y_0-x_*\|^2}{2 \sum_{k=1}^n \eta_{k-1}} + \frac{\varepsilon}{2} \le \frac{\|y_0-x_*\|^2}{2(n-k_0) \tilde \eta} + \frac{\varepsilon}{2}.
\]
In order to have $\min_{1\le k \le n} f(\tilde y_k) - f_* \le \varepsilon$, we need
\begin{equation}\label{ineq:n-k0}
    n-k_0= {\cal O} \left(\frac{\|y_0-x_*\|^2}{\tilde \eta\varepsilon}+1\right).
\end{equation}
Moreover, it follows from the way $\eta_k$ is updated in step 3 and Lemma \ref{lem:APBM}(d) that
\begin{equation}\label{ineq:k0}
    k_0 = {\cal O}\left( \log\left(\frac{\eta_0}{\tilde\eta}\right) + 1\right) = {\cal O}\left( \log\left(\frac{\eta_0}{\underline{\eta}}\right) + 1\right).
\end{equation}
Indeed, \eqref{ineq:dist} holds with $n$ replaced by any $k \le n$ and
\[
\|y_k-x_*\|^2 \le \|y_0-x_*\|^2 + n \eta_0 \varepsilon.
\]
It thus follows from \eqref{ineq:n-k0} and \eqref{ineq:k0} that $\{y_k\}$ is bounded.
As a result, using Lemma \ref{lem:t1}, we can derive a uniform bound on $\delta_1$ for every call to Algorithm~\ref{alg:PBS}.
Now, using Theorem \ref{thm:bundle}, we have the iteration-complexity of every call to Algorithm \ref{alg:PBS} is uniformly bounded by
\begin{equation}\label{bnd1}
    \tilde {\cal O}\left(\tilde \eta L_\alpha^{\frac{2}{\alpha+1}} \left(\frac{1}{\varepsilon}\right)^{\frac{1-\alpha}{\alpha+1}} +1 \right)
\end{equation}
for every cycle $k\ge k_0$ and by
\begin{equation}\label{bnd2}
    \tilde {\cal O}\left(\eta_0 L_\alpha^{\frac{2}{\alpha+1}} \left(\frac{1}{\varepsilon}\right)^{\frac{1-\alpha}{\alpha+1}} +1  \right)
\end{equation}
for every cycle $k\le k_0-1$.
Hence, multiplying \eqref{ineq:n-k0} and \eqref{bnd1} and using \eqref{cond:eta} and the definition of $\underline{\eta}$ in \eqref{ineq:etak}, we obtain the iteration-complexity  
\[
\tilde {\cal O}\left(\frac{L_\alpha^{\frac{2}{\alpha+1}} \|y_0-x_*\|^2}{\varepsilon^{\frac{2}{\alpha+1}}} +1 \right)
\]
for cycles $k\ge k_0$,
and multiplying \eqref{ineq:k0} and \eqref{bnd2}, we obtain the iteration-complexity  
\[
\tilde {\cal O}\left(\eta_0 L_\alpha^{\frac{2}{\alpha+1}} \left(\frac{1}{\varepsilon}\right)^{\frac{1-\alpha}{\alpha+1}}
        \log\left(\frac{\eta_0}{\underline{\eta}}\right) +1 \right)
\]
for cycles $k\le k_0-1$.
Finally, the total iteration-complexity \eqref{bound:total} clearly follows from the above two bounds.
\end{proof}

We note that the final $\varepsilon$-solution produced by Algorithm~\ref{alg:PB} is the point $\tilde y_k$ that achieves $\min_{1 \le k \le n} f(\tilde{y}_k)$. This differs from the last-iterate convergence observed in the smooth case, since the objective function $f$ here is Hölder smooth and may include the nonsmooth case (i.e., $\alpha = 0$).

\textbf{Discussion on other universal methods}
Several universal methods based on the backtracking line-search procedure have been studied in the literature. 
Paper \cite{nesterov2015universal} considers the same Hölder smooth problem (with an additional hybrid function $h$) as in this paper. 
To finds an $\varepsilon$-solution of \eqref{eq:opt}, the universal primal gradient method proposed in \cite{nesterov2015universal} starts from an initial pair $(\hat x_0,\eta_0)$ and in the $(j+1)$-th iteration
searches for a pair $(x_\eta,\eta)$ satisfying a condition 
\begin{equation}\label{ineq:condition}
    f(x_\eta)-\ell_f(x_\eta;\hat x_{j}) - \frac{1}{2\eta}\|x_\eta-\hat x_{j}\|^2 \le \frac{\varepsilon}2,
\end{equation}
where $\ell_f(u;v) = f(v) + \inner{f'(v}{u-v}$ and 
\begin{equation}\label{def:xlam}
    x_\eta =\underset{u\in  \R^n}\argmin
		\left\lbrace  \ell_f(u;\hat x_{j})+h(u) +\frac{1}{2\eta} \|u- \hat x_{j} \|^2 \right\rbrace.
\end{equation}
If the condition \eqref{ineq:condition} is not satisfied, then the method rejects the pair, sets $\eta \leftarrow \eta/2$, and updates $x_\eta$ as in \eqref{def:xlam} with the new $\eta$, otherwise, it accepts the pair and sets $(\hat x_{j+1},\eta_{j+1})=(x_\eta,\eta)$.
Two other universal methods are developed in \cite{nesterov2015universal}, namely, the universal dual gradient method and the universal fast gradient method. Following \cite{nesterov2015universal}, paper \cite{grimmer2024optimal} extends the universal fast gradient method to the case of hybrid functions \eqref{ineq:composite}.
Motivated by the bundle-level method of \cite{lemarechal1995new}, paper \cite{lan2015bundle} proposes two accelerated variants, i.e., the accelerated bundle-level method and the accelerated prox-level method.
The parallel bundle method of \cite{diaz2023optimal} is also shown to be universal at the price of running multiple threads.

Paper~\cite{liang2024unified} proposes an adaptive composite subgradient (A-CS) method for solving \eqref{eq:opt} where $f$ satisfies
     \begin{equation}\label{ineq:est}
	\|f'(x)-f'(y)\| \le 2M_f + L_f\|x-y\|, \quad \forall x,y \in \R^d.
	\end{equation}
It is shown in Proposition 2.1 of \cite{liang2024unified} that any function $f$ that satisfies
\[
\|f'(x)-f'(y)\| \le 2M_\alpha + L_\alpha\|x-y\|^\alpha, \quad \forall x,y \in \R^d,
\]
for some $\alpha \in [0,1]$ also satisfies \eqref{ineq:est} with
\[
M_f(\theta):=M_\alpha + \frac{L_\alpha \theta}{2}, \quad L_f (\theta):= L_\alpha \alpha\left(\frac{1-\alpha}{\theta}\right)^{\frac{1-\alpha}{\alpha}}
\]
for any $\theta>0$.
Hence, the Hölder smooth functions \eqref{ineq:semi-smooth} considered in this paper are included in the class of functions satisfying \eqref{ineq:est}.
More interestingly, a careful look at A-CS of \cite{liang2024unified} and the universal primal gradient method of \cite{nesterov2015universal} reveals that the two methods are identical. 


The universal primal gradient method is essentially an adaptive subgradient method and the convergence of subgradient methods relies on small enough stepsizes, so it is natural to enforce \eqref{ineq:condition} to make the method adaptive.
However, the bundle method converges with any constant stepsize $\eta$ since it guarantees the condition $\delta_j\le \varepsilon/2$, which is in the same spirit of \eqref{ineq:condition}, by the cutting-plane approach (i.e., Algorithm \ref{alg:PBS}) but not by small $\eta$. Therefore, it is not necessary to use a small $\eta$ in every iteration of each call to Algorithm~\ref{alg:PBS}. Instead of frequently reducing $\eta$, by the introduction of $\beta_0$, APBM develops a way to regulate the complexity of Algorithm \ref{alg:PBS} and adjust $\eta$ only when \eqref{ineq:test} is not always true in the previous call to Algorithm~\ref{alg:PBS}.
Another difference between APBM and the universal primal gradient method is that the latter rejects all the pairs $(x_\eta,\eta)$ until \eqref{ineq:condition} is satisfied, but APBM always accepts the output of Algorithm~\ref{alg:PBS} even if \eqref{ineq:test} is not true for every iteration in Algorithm~\ref{alg:PBS}.
Therefore, APBM potentially employs a larger stepsize $\eta$ than the universal primal gradient method and is thus a more relaxed adaptive method.

\vgap

{\bf Discussion on optimal universal methods}
The lower complexity bound for solving \eqref{eq:opt} is shown in \cite{nemirovsky1983problem} to be
\[
{\cal O}\left(\left(\frac{L_\alpha \|y_0-x_*\|^{1+\alpha}}{\varepsilon}\right)^{\frac{2}{1+3\alpha}}\right). 
\]
The well-known Nesterov's accelerated gradient method has been shown in \cite{nemirovskii1985optimal} to match the above complexity bound and hence is an optimal method.
The accelerated bundle-level method of \cite{lan2015bundle}, the universal fast gradient method of \cite{nesterov2015universal}, and a follow-up work \cite{grimmer2024optimal} all establish optimal complexity bounds.

On the other hand, the dominant term of bound \eqref{bound:total} is its first term and it is only optimal when $\alpha=0$, i.e., $f$ is $L_0$-Lipschitz continuous. Motivated by \cite{nemirovskii1985optimal}, it is possible to develop optimal universal methods based on the accelerated gradient method.
This requires accelerated schemes in both PPF and Algorithm \ref{alg:PBS}.
Paper \cite{MonteiroSvaiterAcceleration} proposes an accelerated variant of PPF, which is extended by \cite{bubeck2019near,jiang2019optimal,gasnikov2019optimal} to obtain optimal $p$-th order methods with convergence rate ${\cal O}(k^{-(3p+1)/2})$ for $p\ge 2$.

We finally note that this paper does not aim to develop the optimal complexity of universal methods; rather, it presents an interesting application of our analysis of Algorithm \ref{alg:PBS} in the context of universal methods.

\section{Proximal Sampling Algorithm}\label{sec:sampling}


Assuming the RGO in the ASF can be realized, the ASF exhibits remarkable convergence properties. It was shown in \cite{lee2021structured} that Algorithm \ref{alg:ASF} converges linearly when $f$ is strongly convex. This convergence result is recently improved in \cite{chen2022improved} under various weaker assumptions on the target distribution $\pi^X \propto \exp(-f)$. Below we present several convergence results established in \cite{chen2022improved} that will be used in this paper, under the assumptions that $\pi^X$ is log-concave, or satisfies the log-Sobolev inequality or Poincar\'e inequality (PI). 
Recall that a probability distribution $\nu$ satisfies PI with constant $C_{\rm PI} > 0$ ($1/C_{\rm PI}$-PI) if for any smooth bounded function $\psi : \R^d\to\R$,
\[
    \mE_\nu[(\psi-\mE_\nu(\psi))^2]
    \le C_{\rm PI} \mE_\nu[\norm{\nabla \psi}^2].
\]

To this end, for two probability distributions $\rho \ll \nu$, we denote by 
    \[
        H_\nu(\rho) := \int \rho \log\frac{\rho}{\nu},\quad \chi_\nu^2 (\rho):= \int \frac{\rho^2}{\nu}-1
    \]
the {\em KL divergence} and the {\em Chi-squared divergence}, respectively. 
We denote by $W_2$ the Wasserstein-2 distance
    \[
        W_2^2(\nu,\rho) := \min_{\gamma\in \Pi(\nu,\rho)}\int \|x-y\|^2 {\mathrm d}\gamma(x,y),
    \]
where $\Pi(\nu,\rho)$ represents the set of all couplings between $\nu$ and $\rho$.

\begin{theorem}[{\cite[Theorems 2 \& 4]{chen2022improved}}]\label{Thm:LC}
We denote by $\rho_k^X$ the law of $x_k$ of Algorithm \ref{alg:ASF} starting from any initial distribution $\rho_0^X$. Then, the following statements hold:
\begin{itemize}
    \item[a)] if $\pi^X \propto \exp(-f)$ is log-concave (\textit{i.e.}, $f$ is convex), then $H_{\pi^X}(\rho_k^X)
        \le W_2^2(\rho_0^X, \pi^X)/(k\eta)$;
    \item[b)] if $\pi^X \propto \exp(-f)$ satisfies $\lam$-PI, then $\chi_{\pi^X}^2(\rho^X_k) \le \chi_{\pi^X}^2(\rho^X_0)/{(1 + \lam \eta)}^{2k}$.
\end{itemize}
\end{theorem}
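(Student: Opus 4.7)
The plan is to recognize ASF as Gibbs sampling on the augmented distribution $\pi(x,y) \propto \exp(-f(x) - \|x-y\|^2/(2\eta))$, whose marginals are $\pi^X \propto \exp(-f)$ and $\pi^Y \propto \exp(-g)$ for $g(y) = -\log\int \exp(-f(x) - \|x-y\|^2/(2\eta))\,\mathrm{d}x$. A central fact, following from the Brascamp--Lieb/Pr\'ekopa--Leindler machinery, is that $g$ is convex and $(1/\eta)$-smooth whenever $f$ is convex, so $\pi^Y$ is log-concave with a smooth potential. Correspondingly, the forward step $x_k \to y_k$ is exactly a Gaussian convolution (heat flow of variance $\eta$), while the backward step $y_k \to x_{k+1}$ via the RGO admits an interpretation as the time-reversal of a diffusion, which makes it amenable to the same Fisher-information calculus as the forward step.

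For part (a), I would analyze one full cycle of ASF through a continuous-time interpolation. Introducing a Brownian-bridge-type interpolation $(\rho_t^Y)_{t\in[0,\eta]}$ between the forward and backward halves of one ASF step, one differentiates $H_{\pi^Y}(\rho_t^Y)$ in $t$ and picks up a Fisher-information term via de Bruijn's identity. Integrating and using convexity of $g$ in the Wasserstein-gradient-flow sense yields a proximal-point-style descent estimate of the form
\[
2\eta\, H_{\pi^Y}(\rho_{k+1}^Y) \le W_2^2(\rho_k^Y, \pi^Y) - W_2^2(\rho_{k+1}^Y, \pi^Y).
\]
Telescoping over iterations $0$ through $k-1$, using monotonicity of $H_{\pi^Y}(\rho_k^Y)$ in $k$, and then invoking the data-processing inequality together with the relation $\pi^Y = \pi^X * \mathcal{N}(0,\eta I)$ to pass between the $X$- and $Y$-marginals yields the claimed $W_2^2(\rho_0^X,\pi^X)/(k\eta)$ bound on $H_{\pi^X}(\rho_k^X)$.

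For part (b), the strategy is to view the ASF kernel as a composition $T = R \circ F$, where $F$ is the Gaussian convolution $x \mapsto y$ and $R$ is the RGO $y \mapsto x$. Under $\lambda$-PI for $\pi^X$, one shows that $F$ contracts $\chi^2$-divergence by a factor $1/(1+\lambda\eta)$ via an Ornstein--Uhlenbeck/spectral-gap argument (PI is preserved under Gaussian smoothing with a quantitative improvement in the constant), and a parallel argument on $R$ using the reverse-diffusion representation contributes a second factor $1/(1+\lambda\eta)$. Composing gives per-cycle contraction $(1+\lambda\eta)^{-2}$, which iterates $k$ times to the stated bound.

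The main obstacle will be the rigorous analysis of the RGO step: unlike the explicit Gaussian convolution, it has no closed-form dynamics. The resolution, following the proximal-sampler literature, is to encode the RGO as the terminal distribution of a reverse-time stochastic differential equation driven by $\nabla g$ and a score term; once this representation is in place, both the de Bruijn identity needed for (a) and the $\chi^2$ contraction needed for (b) follow from parallel It\^o/Fokker--Planck calculations on the forward and reverse halves of each ASF cycle.
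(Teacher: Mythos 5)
This theorem is not proved in the paper: it is stated with the explicit attribution {\cite[Theorems~2~\&~4]{chen2022improved}} and imported as a known convergence result for ASF, so there is no in-paper argument to compare your proposal against. What you have written is a plausible reconstruction of the proof route that the cited reference actually takes, namely the heat-flow interpretation of step~1, the time-reversed diffusion interpretation of the RGO step~2, de~Bruijn's identity and Fisher information to obtain a Wasserstein proximal-descent estimate for the log-concave case, and a $\chi^2$ contraction of the forward and reverse semigroups under a Poincar\'e inequality. Two places in your sketch are stated more loosely than a complete argument would allow: the ``Brownian-bridge interpolation'' between the two half-steps is not quite the mechanism used---the cited analysis evolves both the iterate law and the stationary law along a simultaneous heat semigroup and differentiates the relative entropy along that common flow, which is what makes the de~Bruijn/Fisher-information computation symmetric between the forward and reverse halves---and the claim that ``PI is preserved under Gaussian smoothing with a quantitative improvement in the constant'' is informal; the actual ingredient is a spectral/$\chi^2$-contraction lemma for the heat (and time-reversed) semigroup against a $\lambda$-PI stationary measure, giving exactly the $(1+\lambda\eta)^{-1}$ factor per half-step, rather than a statement about how the PI constant of $\pi^Y$ itself improves. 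With those two points made precise, the plan is consistent with the source of the cited theorem.
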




As discussed earlier, to use ASF in sampling problems, we need to realize the RGO with efficient implementations. In the rest of this section, we develop efficient algorithms for RGO associated with the two scenarios of sampling we are interested in, and then combine them with the ASF to establish a proximal algorithm for sampling. The complexity of the proximal algorithm can be obtained by combining the above convergence results for ASF and the complexity results we develop for RGO. The rest of the section is organized as follows. In Subsection~\ref{sec:RGO} we develop an efficient algorithm for RGO associated with Hölder smooth potentials via rejection sampling. This is combined with ASF to obtain an efficient sampling algorithm from Hölder smooth potentials. In Subsection~\ref{sec:composite}, we further extend results to the second setting, i.e., hybrid potentials.

\subsection{Sampling from Hölder Smooth Potentials}\label{sec:RGO}

The bottleneck of using the ASF (Algorithm \ref{alg:ASF}) in sampling tasks with general distributions is the availability of RGO implementations.
In this subsection, we address this issue for convex Hölder smooth potentials by developing an efficient algorithm for the corresponding RGO. 

Our algorithm of RGO for $f$ is based on rejection sampling. We use a special proposal, namely a Gaussian distribution centered at the $\delta$-solution of \eqref{eq:subproblem}, which is obtained by invoking Algorithm \ref{alg:PBS}.
With this proposal and a sufficiently small $\eta>0$, the expected number of rejection sampling steps to obtain one effective sample turns out to be bounded from above by a dimension-free constant. 
To bound the complexity of the rejection sampling, we develop a novel technique to estimate a modified Gaussian integral (see Proposition~\ref{prop:key}).

To this end, let $J, \tilde x_J, x_J$ be the outputs of Algorithm \ref{alg:PBS} and define
\begin{subequations}\label{eq:newh}
\begin{align}
    h_1 &:= \frac{1}{2\eta}\|\cdot-x_J\|^2 + f_y^\eta(\tx_J) - \delta, \label{def:tf} \\
    h_2 &:=  \frac{1}{2\eta}\|\cdot-x^*\|^2 + \frac{L_\alpha}{\alpha+1}\|\cdot-x^*\|^{\alpha+1} + f_y^\eta(x^*). \label{def:h}
\end{align}
\end{subequations}
Note that $h_2$ is only used for analysis and thus the fact it depends on $x^*$ is not an issue.
Algorithm~\ref{alg:RGO-bundle} describes the implementation of RGO for $f$ based on Algorithm~\ref{alg:PBS} and rejection sampling. 

\begin{algorithm}[H]
	\caption{RGO Implementation based on Rejection Sampling}
	\label{alg:RGO-bundle}
	\begin{algorithmic}
	    \STATE 1. Let $y\in \R^d$, $\eta>0$, and $\delta>0$ be given, and run Algorithm \ref{alg:PBS} to compute $x_J$ and $\tx_J$.
		\STATE 2. Generate  $X\sim \exp(-h_1(x))$.
		\STATE 3. Generate $U\sim {\cal U}[0,1]$.
        \IF {$U \le \exp\left(-f_y^\eta(X)+h_1(X)\right)$,}
            \STATE accept/return $X$;
        \ELSE
            \STATE reject $X$ and go to step 2.
        \ENDIF
	\end{algorithmic}
\end{algorithm}

\begin{lemma}\label{lem:h1h2delta}
Assume $f$ is convex and $L_\alpha$-Hölder smooth. Let $f_y^\eta$ be as in \eqref{eq:subproblem} and $h_1$ and $h_2$ be as in \eqref{eq:newh}.
Then, for every $x\in \R^d$, we have
\begin{equation}\label{ineq:sandwich}
    h_1(x) \le f_y^\eta(x) \le h_2(x).
\end{equation}
\end{lemma}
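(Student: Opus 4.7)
The plan is to prove the two inequalities separately, with the left inequality being essentially a restatement of Lemma~\ref{lem:bundle}(d) and the right inequality requiring the optimality condition for $x^*$ together with the semi-smoothness bound \eqref{ineq:semi}.

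For the lower bound $h_1(x)\le f_y^\eta(x)$, I would apply Lemma~\ref{lem:bundle}(d) directly: it gives $f_y^\eta(\tilde x_J) - f_y^\eta(x) \le \delta - \frac{1}{2\eta}\|x_J - x\|^2$ for every $x\in\R^d$. Rearranging yields
\[
f_y^\eta(x) \ge f_y^\eta(\tilde x_J) - \delta + \frac{1}{2\eta}\|x_J - x\|^2,
\]
and the right-hand side is exactly $h_1(x)$ by the definition \eqref{def:tf}. So the lower bound is immediate.

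For the upper bound $f_y^\eta(x)\le h_2(x)$, the key is the optimality condition from Lemma~\ref{lem:bundle}(e), namely $f'(x^*) := -\tfrac{1}{\eta}(x^*-y)\in\partial f(x^*)$. The semi-smoothness inequality \eqref{ineq:semi} applied at $(u,v)=(x,x^*)$ gives
\[
f(x) \le f(x^*) + \langle f'(x^*), x-x^*\rangle + \frac{L_\alpha}{\alpha+1}\|x-x^*\|^{\alpha+1}.
\]
Adding $\tfrac{1}{2\eta}\|x-y\|^2$ to both sides and expanding $\|x-y\|^2 = \|x-x^*\|^2 + 2\langle x-x^*, x^*-y\rangle + \|x^*-y\|^2$, the cross term $2\langle x-x^*, x^*-y\rangle/(2\eta) = -\langle f'(x^*), x-x^*\rangle$ exactly cancels the inner product on the right-hand side. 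Collecting the remaining quadratic-in-$(x^*-y)$ terms into $f(x^*) + \tfrac{1}{2\eta}\|x^*-y\|^2 = f_y^\eta(x^*)$ produces exactly $h_2(x)$ as defined in \eqref{def:h}.

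Neither direction poses a real obstacle, since the lower bound is a direct consequence of an already-proved lemma and the upper bound is a standard semi-smooth majorization around the minimizer in which the linear term is absorbed into the quadratic via the first-order optimality condition. If there is any subtlety, it is only in noting that the cancellation of the linear term is an equality (not an inequality), so the looseness in the upper bound on $f_y^\eta$ comes entirely from the semi-smooth inequality \eqref{ineq:semi} and not from the quadratic regularizer.
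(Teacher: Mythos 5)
Your proof is correct and follows essentially the same route as the paper's: the lower bound is the immediate rearrangement of Lemma~\ref{lem:bundle}(d), and the upper bound combines the semi-smooth majorization \eqref{ineq:semi} at $(u,v)=(x,x^*)$ with the optimality condition from Lemma~\ref{lem:bundle}(e), whereupon expanding $\|x-y\|^2$ cancels the linear term exactly. The only cosmetic difference is that the paper starts from the difference $f_y^\eta(x)-f_y^\eta(x^*)$ and then applies the bound, whereas you add the quadratic to the semi-smooth inequality first; the algebra is identical.
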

\begin{proof}
The first inequality in \eqref{ineq:sandwich} immediately follows from Lemma \ref{lem:bundle}(d) and the definition of $h_1$ in \eqref{def:tf}.
By the definition of $f_y^\eta$ in \eqref{eq:subproblem} we get
\begin{align}
    f_y^\eta(x) - f_y^\eta(x^*)  
    = & f(x) - f(x^*)  + \frac{1}{2\eta}\|x-y\|^2 - \frac{1}{2\eta}\|x^*-y\|^2 \nn \\
    = & f(x) - f(x^*) + \frac{1}{2\eta}\|x-x^*\|^2 + \frac{1}{\eta}\inner{x-x^*}{x^*-y}. \label{eq:equal}
\end{align}
It follows from Lemma \ref{lem:bundle}(e) and \eqref{ineq:semi} with $(u,v)=(x,x^*)$ that
\[
f(x) - f(x^*) + \frac{1}{\eta}\inner{x^*-y}{x-x^*} \le \frac{L_\alpha}{\alpha+1}\|x-x^*\|^{\alpha+1},
\]
which together with \eqref{eq:equal} implies that
\[
f_y^\eta(x) - f_y^\eta(x^*) \le \frac{L_\alpha}{\alpha+1}\|x-x^*\|^{\alpha+1} + \frac{1}{2\eta}\|x-x^*\|^2.
\]
Using the above inequality and the definition of $h_2$ in \eqref{def:h}, we conclude that the second inequality in \eqref{ineq:sandwich} holds.
\end{proof}

From the expression of $h_1$ in \eqref{def:tf}, it is clear that the proposal distribution $\exp(-h_1(x))$ is a Gaussian centered at $x_J$. To achieve a tight bound on the expected runs of the rejection sampling, we use a function $h_2$ which is not quadratic; the standard choice of quadratic function does not give as tight results due to the lack of smoothness. To use this $h_2$ in the complexity analysis, we need to estimate the integral $\int \exp(-h_2)$, which turns out to be {\em a highly nontrivial task}. Below we establish a technical result on a modified Gaussian integral, which will be used later to bound the integral $\int \exp(-h_2)$ and hence the complexity of the RGO rejection sampling in Algorithm \ref{alg:RGO-bundle}.


\begin{proposition}\label{prop:key}
Let $\alpha\in[0,1]$, $\eta>0$, $a\ge 0$ and $d\ge 1$. If 
    \begin{equation}\label{eq:aeta}
        2a(\eta d)^{(\alpha+1)/2} \le 1,
    \end{equation}
then
\begin{equation}\label{ineq:int}
    \int_{\R^d} \exp\left(-\frac{1}{2\eta}\|x\|^2 - a\|x\|^{\alpha+1}\right) \rd x
\ge \frac{(2\pi\eta)^{d/2}}2.
\end{equation}
\end{proposition}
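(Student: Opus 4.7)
The plan is to reduce the claim to a probabilistic statement. Writing $X \sim \mathcal{N}(0,\eta I_d)$, the Gaussian integral $\int_{\R^d}\exp(-\|x\|^2/(2\eta))\,\rd x = (2\pi\eta)^{d/2}$, so the inequality \eqref{ineq:int} is equivalent to showing
\[
\mE\bigl[\exp\bigl(-a\|X\|^{\alpha+1}\bigr)\bigr] \;\ge\; \frac{1}{2}.
\]
In other words, one needs to control the ``damping'' caused by the non-Gaussian factor $\exp(-a\|x\|^{\alpha+1})$ under the Gaussian measure.

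The key idea is to apply Jensen's inequality twice. First, since $s \mapsto \exp(-s)$ is convex,
\[
\mE\bigl[\exp(-a\|X\|^{\alpha+1})\bigr] \;\ge\; \exp\bigl(-a\,\mE\|X\|^{\alpha+1}\bigr).
\]
Second, because $\alpha \in [0,1]$ gives $(\alpha+1)/2 \in [1/2,1]$, the function $t \mapsto t^{(\alpha+1)/2}$ is concave on $[0,\infty)$. Jensen in the other direction then yields
\[
\mE\|X\|^{\alpha+1} \;=\; \mE\bigl[(\|X\|^2)^{(\alpha+1)/2}\bigr] \;\le\; \bigl(\mE\|X\|^2\bigr)^{(\alpha+1)/2} \;=\; (\eta d)^{(\alpha+1)/2},
\]
where I use that $\mE\|X\|^2 = \eta d$ for $X\sim \mathcal{N}(0,\eta I_d)$.

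Combining the two bounds with the hypothesis \eqref{eq:aeta} (which says $a(\eta d)^{(\alpha+1)/2}\le 1/2$), I get
\[
\mE\bigl[\exp(-a\|X\|^{\alpha+1})\bigr] \;\ge\; \exp\bigl(-a(\eta d)^{(\alpha+1)/2}\bigr) \;\ge\; e^{-1/2} \;\ge\; \tfrac{1}{2},
\]
which when multiplied by $(2\pi\eta)^{d/2}$ gives exactly \eqref{ineq:int}. The proof is really a two-line Jensen argument once the problem is framed as a Gaussian expectation; the only subtlety worth flagging is that one uses Jensen in \emph{opposite} directions for the outer exponential (convex) and for the inner power $t^{(\alpha+1)/2}$ (concave in this regime of $\alpha$), and that the resulting slack $e^{-1/2}$ vs.\ $1/2$ is precisely what makes the stated constant $1/2$ work. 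There is no real obstacle; the main thing is recognizing that the naive ``truncate and bound on a ball'' strategy wastes a factor and does not recover $1/2$, whereas Jensen gives the dimension-free constant $e^{-1/2}$ cleanly.
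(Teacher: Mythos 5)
Your proof is correct, and it takes a genuinely different — and considerably more economical — route than the paper's. The paper first passes to spherical coordinates, writes the radial integral as a function $F_{d-1,\eta}(a)$, differentiates in $a$ to obtain a linearization bound $F_{d-1,\eta}(a)\ge F_{d-1,\eta}(0)-aF_{d+\alpha,\eta}(0)$, and then evaluates the two $a=0$ integrals exactly as gamma functions, finally invoking Wendel's double inequality to control the gamma ratio $\Gamma\bigl(\tfrac{d+\alpha+1}{2}\bigr)/\Gamma\bigl(\tfrac{d}{2}\bigr)$ by $(d/2)^{(\alpha+1)/2}$. You instead phrase the whole thing as a Gaussian expectation and use Jensen twice — once with $s\mapsto e^{-s}$ convex, once with $t\mapsto t^{(\alpha+1)/2}$ concave for $\alpha\in[0,1]$ — plus the elementary fact $\mE\|X\|^2=\eta d$. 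Both proofs land on exactly the same quantity $a(\eta d)^{(\alpha+1)/2}$ as the controlling parameter, so the hypothesis is used in the same way; but your route avoids spherical integration, the gamma function, and Wendel entirely, and even yields the slightly better constant $e^{-1/2}\approx 0.607$ in place of $1/2$. What the paper's approach buys instead is an explicit first-order expansion of the integral in $a$ together with exact gamma-function values — useful if one later wants sharper or higher-order control in $a$ — whereas your argument is a one-shot bound. For the purpose of this proposition, your proof is cleaner and entirely adequate.
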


\begin{proof}
Denote $r=\|x\|$, then
\[
\rd x= r^{d-1} \rd r \rd S^{d-1},
\]
where $ \rd S^{d-1} $ is the surface area of the $ (d-1) $-dimensional unit sphere.
It follows that
\begin{align}
  \int_{\R^d} \exp\left(-\frac{1}{2\eta}\|x\|^2 - a\|x\|^{\alpha+1} \right) \rd x &= \int_0^\infty \int \exp\left(-\frac{1}{2\eta}r^2 - a r^{\alpha+1} \right) r^{d-1} \rd r \rd S^{d-1} \nn \\
&= \frac{2 \pi^{d/2}}{\Gamma\left( \frac d2 \right) } \int_0^\infty \exp\left( -\frac{1}{2\eta}r^2 - a r^{\alpha+1} \right) r^{d-1} \rd r.  \label{eq:int}
\end{align}
In the above equation, we have used the fact that the total surface area of a $ (d-1) $-dimensional unit sphere is $2 \pi^{d/2}/\Gamma\left( \frac d2 \right)$ where $\Gamma(\cdot)$ is the gamma function, i.e.,
\begin{equation}\label{eq:gamma}
        \Gamma(z)=\int_0^\infty t^{z-1} e^{-t} \rd t.
    \end{equation} 
Defining
\begin{equation}\label{def:F}
    F_{d,\eta}(a):= \int_0^\infty \exp\left( -\frac{1}{2\eta}r^2 - a r^{\alpha+1} \right) r^{d} \rd r,
\end{equation}
to establish \eqref{ineq:int},
it suffices to bound $F_{d-1,\eta}(a)$ from below.

It follows directly from the definition of $F_{d,\eta}$ in \eqref{def:F} that
\[
\frac{\rd F_{d-1,\eta}(a)}{\rd a}=\int_0^\infty \exp\left( -\frac{1}{2\eta}r^2 - a r^{\alpha+1} \right) (-r^{\alpha+1}) r^{d-1} \rd r 
=- F_{d+\alpha,\eta}(a).
\]
This implies $F_{d,\eta}$ is monotonically decreasing and thus $F_{d+\alpha,\eta}(a)\le F_{d+\alpha,\eta}(0)$. As a result,
    \[
        \frac{\rd F_{d-1,\eta}(a)}{\rd a} \ge - F_{d+\alpha,\eta}(0)
    \]
and therefore,
\begin{equation}\label{ineq:F}
    F_{d-1,\eta}(a) \ge F_{d-1,\eta}(0) - a F_{d+\alpha,\eta}(0).
\end{equation}

Setting $t=r^2/(2\eta)$, we can write
\begin{align}
    F_{d,\eta}(0) &=\int_0^\infty \exp\left( -\frac{1}{2\eta}r^2 \right) r^{d} \rd r
    =\int_0^\infty e^{-t} (2\eta t)^{\frac{d-1}2} \eta \rd t \nn \\
    &= 2^{\frac{d-1}2} \eta^{\frac{d+1}2} \int_0^\infty e^{-t}  t^{\frac{d-1}2} \rd t.
\end{align}
In view of the definition of the gamma function \eqref{eq:gamma}, we obtain
    \begin{equation}\label{eq:F}
        F_{d,\eta}(0) =2^{\frac{d-1}2} \eta^{\frac{d+1}2} \Gamma\left(\frac{d+1}{2}\right).
    \end{equation}
Applying the Wendel's double inequality \eqref{ineq:Wendel}
yields
\[
\frac{\Gamma\left(\frac{d+\alpha+1}{2} \right)}{\Gamma\left(\frac{d}{2} \right)} \le \left(\frac d2\right)^{\frac{\alpha+1}2}.
\]
Using \eqref{ineq:F}, \eqref{eq:F}, the above inequality and the assumption \eqref{eq:aeta}, we have
\begin{align*}
    F_{d-1,\eta}(a) &\ge F_{d-1,\eta}(0) - a F_{d+\alpha,\eta}(0) \\
    &= 2^{\frac d2-1} \eta^{\frac d2} \Gamma\left(\frac{d}{2}\right) - a 2^{\frac{d+\alpha-1}2} \eta^{\frac{d+\alpha+1}2} \Gamma\left(\frac{d+\alpha+1}{2}\right) \\
    &= 2^{\frac d2-1} \eta^{\frac d2} \Gamma\left(\frac{d}{2}\right) \left(1 - a 2^{\frac{\alpha+1}2} \eta^{\frac{\alpha+1}2} \frac{\Gamma\left(\frac{d+\alpha+1}{2} \right)}{\Gamma\left(\frac{d}{2} \right)} \right)\\
     &\ge 2^{\frac d2-1} \eta^{\frac d2} \Gamma\left(\frac{d}{2}\right) \left(1 - a (\eta d)^{\frac{\alpha+1}2} \right) \ge \frac14 (2\eta)^{\frac d2} \Gamma\left(\frac{d}{2}\right).
\end{align*}
The result \eqref{ineq:int} then follows from the above inequality and \eqref{eq:int}.
\end{proof}

We now proceed to show that the number of rejections in Algorithm \ref{alg:RGO-bundle} is bounded from above by a small constant when $\delta$ is properly chosen. In particular, as shown in Proposition \ref{prop:expected}, it only gets worse by a factor of $\exp(\delta)$ and the factor does not depend on the dimension $d$.
Hence, the implementation of RGO for $f$ is computationally efficient in practice.

\begin{proposition}\label{prop:expected}
Assume $f$ is convex and $L_\alpha$-Hölder smooth. 
If 
\begin{equation}\label{ineq:eta_mu}
    \eta \le \frac{(\alpha+1)^{\frac{2}{\alpha+1}}}{(2L_\alpha)^{\frac{2}{\alpha+1}}d}\,,
\end{equation}
then the expected number of iterations in the rejection sampling of Algorithm~\ref{alg:RGO-bundle} is at most $2\exp( \delta)$.
\end{proposition}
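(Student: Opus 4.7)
The plan is to identify the expected number of iterations of the rejection sampling as the ratio of normalization constants of the proposal density to the target density, and then to sandwich $f_y^\eta$ between $h_1$ and $h_2$ using Lemma \ref{lem:h1h2delta} to control this ratio.

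More precisely, since step 4 of Algorithm \ref{alg:RGO-bundle} accepts $X$ drawn from the density $\exp(-h_1)/Z_{h_1}$ with probability $\exp(-f_y^\eta(X))/\exp(-h_1(X))$ (which lies in $[0,1]$ by the first inequality in Lemma \ref{lem:h1h2delta}), the per-iteration acceptance probability equals
\[
\int \frac{\exp(-h_1(x))}{Z_{h_1}}\cdot \frac{\exp(-f_y^\eta(x))}{\exp(-h_1(x))}\rd x = \frac{Z_{f_y^\eta}}{Z_{h_1}},
\]
where $Z_g := \int \exp(-g)$. Hence the expected number of iterations is $Z_{h_1}/Z_{f_y^\eta}$, and using the second inequality in Lemma \ref{lem:h1h2delta} I will upper bound this by $Z_{h_1}/Z_{h_2}$.

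Next I would compute each integral. Because $h_1$ is a translated quadratic by \eqref{def:tf}, one directly gets $Z_{h_1} = (2\pi\eta)^{d/2}\exp(-f_y^\eta(\tx_J)+\delta)$. For $h_2$, after shifting the integration variable to $x-x^*$ and pulling out the constant $\exp(-f_y^\eta(x^*))$, the remaining integral is precisely
\[
\int_{\R^d}\exp\!\left(-\tfrac{1}{2\eta}\|x\|^2 - \tfrac{L_\alpha}{\alpha+1}\|x\|^{\alpha+1}\right)\rd x.
\]
Here I would invoke Proposition \ref{prop:key} with $a = L_\alpha/(\alpha+1)$; the hypothesis \eqref{eq:aeta} becomes $\frac{2L_\alpha}{\alpha+1}(\eta d)^{(\alpha+1)/2}\le 1$, which is exactly the assumption \eqref{ineq:eta_mu} after rearrangement. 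Proposition \ref{prop:key} then gives $Z_{h_2}\ge \frac{1}{2}(2\pi\eta)^{d/2}\exp(-f_y^\eta(x^*))$.

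Finally, combining the two bounds, the factors $(2\pi\eta)^{d/2}$ cancel and I am left with
\[
\frac{Z_{h_1}}{Z_{h_2}} \le 2\exp\!\left(\delta + f_y^\eta(x^*) - f_y^\eta(\tx_J)\right) \le 2\exp(\delta),
\]
where the last inequality uses optimality $f_y^\eta(x^*)\le f_y^\eta(\tx_J)$. The only nontrivial step is the Gaussian-type lower bound on $Z_{h_2}$, which is handled by the already-established Proposition \ref{prop:key}; everything else is straightforward bookkeeping of normalizing constants.
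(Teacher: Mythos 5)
Your proof is correct and follows essentially the same path as the paper: express the expected number of rejections as the ratio $Z_{h_1}/Z_{f_y^\eta}$, lower-bound $Z_{f_y^\eta}$ by $Z_{h_2}$ via Lemma~\ref{lem:h1h2delta}, apply Proposition~\ref{prop:key} with $a=L_\alpha/(\alpha+1)$ (after verifying that \eqref{ineq:eta_mu} is equivalent to \eqref{eq:aeta}), compute $Z_{h_1}$ as a Gaussian integral, and finish with $f_y^\eta(x^*)\le f_y^\eta(\tilde x_J)$. No gaps.
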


\begin{proof}
It is a well-known result for rejection sampling that $X \sim \pi^{X|Y}(x\mid y)$ and the probability that $X$ is accepted is
\begin{equation}\label{eq:probability}
    \mathbb{P}\left(U \leq \frac{\exp(-f_y^\eta(X))}{\exp(-h_1(X))}\right) 
=\frac{\int_{\R^d} \exp(-f_y^\eta(x)) \rd x}{\int_{\R^d} \exp(-h_1(x)) \rd x}.
\end{equation}
If follows directly from the definition of $h_2$ in \eqref{def:h} that 
\begin{align*}
    \int_{\R^d} \exp(-h_2(x)) \rd x &= \exp(-f_y^\eta(x^*)) \int_{\R^d} \exp\left(-\frac{1}{2\eta}\|x-x^*\|^2 -  \frac{L_\alpha}{\alpha+1}\|x-x^*\|^{\alpha+1}\right) \rd x 
\end{align*}
Applying Proposition \ref{prop:key} to the above yields
    \[
        \int_{\R^d} \exp(-h_2(x)) \rd x \ge \exp(-f_y^\eta(x^*)) \frac{(2\pi \eta)^{d/2}}{2}.
    \]
Note that the condition \eqref{eq:aeta} in Proposition \ref{prop:key} holds thanks to \eqref{ineq:eta_mu}.
By Lemma \ref{lem:h1h2delta}, the above inequality leads to 
\begin{equation}\label{ineq:geta}
    \int_{\R^d} \exp(-f_y^\eta(x)) \rd x \ge
\int_{\R^d} \exp(-h_2(x)) \rd x \ge \exp(-f_y^\eta(x^*)) \frac{(2\pi\eta)^{d/2}}2.
\end{equation}
Using the definition of $h_1$ in \eqref{def:tf} and Lemma \ref{lem:Gaussian}, we have
\begin{equation}\label{eq:h1}
    \int_{\R^d} \exp(-h_1(x)) \rd x 
= \exp\left( -f_y^\eta(\tx_J) + \delta \right) (2\pi \eta)^{d/2}.
\end{equation}
Using \eqref{eq:probability}, \eqref{ineq:geta} and the above identity, we conclude that
\[
    \mathbb{P}\left(U \leq \frac{\exp(-f_y^\eta(X))}{\exp(-h_1(X))}\right)
    \ge \frac12 \exp(-f_y^\eta(x^*)+f_y^\eta(\tx_J)-\delta) 
    \ge \frac12 \exp(-\delta),
\]
and the expected number of the iterations is
\[
\frac{1}{\mathbb{P}\left(U \leq \frac{\exp(-f_y^\eta(X))}{\exp(-h_1(X))}\right)}
\le 2\exp(\delta).
\]
\end{proof}

We finally bound the total complexity to sample from a log-concave distribution $\nu$ in \eqref{eq:target} with a Hölder smooth potential $f$. We combine our efficient algorithm (Algorithm \ref{alg:RGO-bundle}) of RGO for Hölder smooth potentials and the convergent results for ASF, namely Theorem \ref{Thm:LC}, to achieve this goal.

\begin{theorem}\label{thm:semi}
Assume $f$ is convex and $L_\alpha$-Hölder smooth, then Algorithm \ref{alg:ASF}, initialized with $\rho_0^X$ and stepsize $\eta \asymp 1/(L_\alpha^\frac{2}{\alpha+1} d)$, using Algorithm \ref{alg:RGO-bundle} as an RGO has the iteration-complexity bound 
    \[
        {\cal O}\left(\frac{L_\alpha^\frac{2}{\alpha+1} d W_2^2(\rho_0^X,\nu)}{\varepsilon}\right)
    \]
to achieve $\varepsilon$ error to the target $\nu\propto \exp(-f)$ in terms of KL divergence. Each RGO requires $\tilde {\cal O}\left(\frac{1}{d} \left(\frac{1}{\delta}\right)^{\frac{1-\alpha}{\alpha+1}} + 1\right)$ subgradient evaluations of $f$ and $2\exp( \delta)$ rejection steps in expectation. 
Moreover, if $\nu$ satisfies PI with constant $C_{\rm PI}>0$, 
then the iteration-complexity bound to achieve $\varepsilon$ error in terms of Chi-squared divergence is 
\[
\tilde{\cal O}\left(C_{\rm PI}L_\alpha^\frac{2}{\alpha+1} d \right).
\]
\end{theorem}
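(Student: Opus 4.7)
The plan is to simply assemble the three ingredients the paper has already built: the convergence rate of ASF from Theorem~\ref{Thm:LC}, the per-call complexity of the cutting-plane method from Theorem~\ref{thm:bundle}, and the rejection-sampling bound from Proposition~\ref{prop:expected}. Everything that is hard has already been proved; the theorem is the bookkeeping that ties them together after picking the stepsize $\eta$.

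First I would fix $\eta$ as in the statement, $\eta \asymp 1/(L_\alpha^{2/(\alpha+1)} d)$, taking the hidden constant small enough that \eqref{ineq:eta_mu} of Proposition~\ref{prop:expected} is satisfied. This choice immediately gives two consequences: (i) each call to Algorithm~\ref{alg:RGO-bundle} is a valid rejection-sampling implementation of RGO, with expected number of rejection steps $\le 2\exp(\delta)$ by Proposition~\ref{prop:expected}; and (ii) substituting this $\eta$ into Theorem~\ref{thm:bundle} yields the per-RGO subgradient-evaluation bound $\tilde{\mathcal{O}}((1/d)(1/\delta)^{(1-\alpha)/(\alpha+1)}+1)$ stated in the theorem. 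One can take $\delta$ to be a small absolute constant so that the $\exp(\delta)$ factor is absorbed.

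Next, for the log-concave case, I would apply Theorem~\ref{Thm:LC}(a): to force $H_{\pi^X}(\rho_k^X) \le \varepsilon$, it suffices that
\begin{equation*}
    k \ge \frac{W_2^2(\rho_0^X,\nu)}{\eta\,\varepsilon}.
\end{equation*}
Plugging in $\eta \asymp 1/(L_\alpha^{2/(\alpha+1)} d)$ produces the outer iteration bound $\mathcal{O}(L_\alpha^{2/(\alpha+1)} d\, W_2^2(\rho_0^X,\nu)/\varepsilon)$ claimed. For the PI case, I would instead invoke Theorem~\ref{Thm:LC}(b) with $\lambda = 1/C_{\rm PI}$, giving $\chi^2_{\pi^X}(\rho_k^X) \le \chi^2_{\pi^X}(\rho_0^X)/(1+\eta/C_{\rm PI})^{2k}$, so that $k = \tilde{\mathcal{O}}(C_{\rm PI}/\eta) = \tilde{\mathcal{O}}(C_{\rm PI} L_\alpha^{2/(\alpha+1)} d)$ iterations suffice to reach $\chi^2$-error $\varepsilon$ (the logarithmic factor swallowing the initial $\chi^2$ and $1/\varepsilon$).

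There is no real obstacle; the only subtlety is verifying that the constant in $\eta$ can be chosen simultaneously to meet \eqref{ineq:eta_mu} and to keep the inner complexity in the stated form, and recognizing that since \eqref{ineq:eta_mu} is a dimension-dependent upper bound on $\eta$, the stepsize must scale as $1/d$, which is precisely why the $d$ factor appears (and only linearly) in the outer iteration counts. Everything else is direct substitution.
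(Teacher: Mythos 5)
Your proposal is correct and matches the paper's approach exactly: the paper's proof is a one-line citation of Theorem~\ref{Thm:LC}, Theorem~\ref{thm:bundle}, and Proposition~\ref{prop:expected} with the stated choice of stepsize, and your argument is simply the expanded bookkeeping of that same assembly. Nothing to add.
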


\begin{proof}
The results follow directly from Theorem \ref{Thm:LC}, Theorem \ref{thm:bundle} and Proposition \ref{prop:expected} with the choice of stepsize $\eta \asymp 1/(L_\alpha^\frac{2}{\alpha+1} d)$.
\end{proof}




\subsection{Sampling from Hybrid Potentials}\label{sec:composite}
In this subsection, we consider sampling from a log-concave distribution $\nu\propto\exp(-f(x))$ associated with a hybrid potential $f$ satisfying \eqref{ineq:composite}. This setting is a generalization of the Hölder smooth setting studied in the previous sections. It turns out that both Algorithm \ref{alg:ASF} and the implementation for RGO, Algorithm \ref{alg:RGO-bundle}, developed for Hölder smooth sampling can be applied directly to this general setting with properly chosen stepsizes.
Below, we extend the analysis in Subsection~\ref{sec:RGO} to the hybrid setting and establish corresponding complexity results.

The following lemma is a counterpart of Lemma \ref{lem:h1h2delta} in the hybrid setting. Its proof is given in Appendix~\ref{sec:proofs}.

\begin{lemma}\label{lem:composite}
Assume $f$ is convex and satisfies \eqref{ineq:composite}.
Define 
\begin{equation}\label{def:h2-hybrid}
h_2(x) := \frac{1}{2\eta} \|x-x^*\|^2 + \sum_{i=1}^n \frac{L_{\alpha_i}}{\alpha_i+1} \|x-x^*\|^{\alpha_i+1} + f_y^{\eta}(x^*).
\end{equation}
Then, $h_2(x)\ge f_y^\eta(x)$ for every $x\in \R^d$.
\end{lemma}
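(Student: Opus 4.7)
The plan is to mirror the proof of the second inequality in Lemma~\ref{lem:h1h2delta}, only replacing the single semi-smoothness bound \eqref{ineq:semi} by its composite counterpart \eqref{ineq:hybrid}. The definition of $h_2$ in \eqref{def:h2-hybrid} is tailored to this substitution: the quadratic part $\|x-x^*\|^2/(2\eta)$ comes from expanding $f_y^\eta(x) - f_y^\eta(x^*)$, while the sum $\sum_i \frac{L_{\alpha_i}}{\alpha_i+1}\|x-x^*\|^{\alpha_i+1}$ comes from the composite upper model for $f$ around $x^*$.

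First I would expand, exactly as in \eqref{eq:equal},
\[
f_y^\eta(x) - f_y^\eta(x^*) = f(x) - f(x^*) + \frac{1}{2\eta}\|x-x^*\|^2 + \frac{1}{\eta}\inner{x-x^*}{x^*-y}.
\]
Next I would use the optimality characterization Lemma~\ref{lem:bundle}(e), which says $-\frac{1}{\eta}(x^*-y)\in \partial f(x^*)$, so this vector can serve as the subgradient $f'(x^*)$ appearing in \eqref{ineq:hybrid}. Applying \eqref{ineq:hybrid} with $(u,v)=(x,x^*)$ then yields
\[
f(x) - f(x^*) + \frac{1}{\eta}\inner{x^*-y}{x-x^*} \;\le\; \sum_{i=1}^n \frac{L_{\alpha_i}}{\alpha_i+1}\|x-x^*\|^{\alpha_i+1}.
\]

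Substituting this bound into the previous identity and recalling the definition \eqref{def:h2-hybrid} of $h_2$ gives $f_y^\eta(x) \le h_2(x)$, which is the claim.

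There is no real obstacle here: the only nontrivial step is recognizing that the specific subgradient provided by Lemma~\ref{lem:bundle}(e) matches the inner-product term produced by expanding $\|x-y\|^2 - \|x^*-y\|^2$, and this is identical to what was already done in the semi-smooth case. The proof is therefore essentially a one-line adaptation and will be short.
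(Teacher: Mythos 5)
Your proposal is correct and matches the paper's own proof step for step: expand via \eqref{eq:equal}, use Lemma~\ref{lem:bundle}(e) to identify $-\frac{1}{\eta}(x^*-y)$ as the subgradient at $x^*$, apply \eqref{ineq:hybrid} with $(u,v)=(x,x^*)$, and read off the definition of $h_2$. No differences to report.
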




The next result is an analogue of the modified Gaussian integral in Proposition \ref{prop:key}.

\begin{proposition}\label{prop:key-new}
Let $\alpha_i\in[0,1]$, $a_i\ge 0$, $\eta>0$, and $d\ge 1$. If 
\begin{equation}\label{eq:aeta-new}
        \eta d \sum_{i=1}^n a_i^{\frac{2}{\alpha_i+1}} \le 1,
    \end{equation}
then
\begin{equation}\label{ineq:int-new}
    \int_{\R^d} \exp\left(-\frac{1}{2\eta}\|x\|^2 - \sum_{i=1}^n a_i\|x\|^{\alpha_i+1}\right) \rd x
\ge (2\pi \eta )^{\frac d2} \exp \left( -\frac12 + \frac{\sum_{i=1}^n (\alpha_i-1)}{4}\right).
\end{equation}
\end{proposition}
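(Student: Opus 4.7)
I plan to follow the template of Proposition~\ref{prop:key}, handling each composite term $a_i\|x\|^{\alpha_i+1}$ separately via Young's inequality with conjugate exponents $p_i = 2/(\alpha_i+1)$ and $q_i = 2/(1-\alpha_i)$. Writing $a_i\|x\|^{\alpha_i+1} = (t_i\|x\|^{\alpha_i+1})\cdot(a_i/t_i)$ and choosing the scaling $t_i = a_i$ yields the central estimate
\[
a_i\,\|x\|^{\alpha_i+1} \;\le\; \frac{\alpha_i+1}{2}\, a_i^{2/(\alpha_i+1)}\,\|x\|^2 \;+\; \frac{1-\alpha_i}{2},
\]
which converts each super-quadratic contribution into a quadratic in $\|x\|$ with coefficient exactly $a_i^{2/(\alpha_i+1)}$ (matching the hypothesis \eqref{eq:aeta-new}) plus an additive constant.

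Summing over $i$, bounding $\alpha_i+1\le 2$, and invoking \eqref{eq:aeta-new} gives $\sum_i(\alpha_i+1)a_i^{2/(\alpha_i+1)}\le 2/(\eta d)$. Hence
\[
\frac{\|x\|^2}{2\eta} + \sum_{i=1}^n a_i\|x\|^{\alpha_i+1} \;\le\; \frac{\|x\|^2}{2\eta}\!\left(1+\frac{2}{d}\right) + \frac{1}{2}\sum_{i=1}^n(1-\alpha_i).
\]
Exponentiating and performing the resulting anisotropic Gaussian integral in closed form then yields
\[
\int_{\R^d}\exp(\cdots)\,\rd x \;\ge\; (2\pi\eta)^{d/2}\,(1+2/d)^{-d/2}\exp\!\left(-\frac{1}{2}\sum_{i=1}^n(1-\alpha_i)\right),
\]
and $(1+2/d)^{-d/2}\ge e^{-1}$ by the elementary inequality $(1+y/d)^{d/2}\le e^{y/2}$.

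The main obstacle will be matching the precise constants $-1/2$ and $1/4$ in \eqref{ineq:int-new}. The Young-based computation above naturally produces exponent $-1+\tfrac{1}{2}\sum(\alpha_i-1)$, which is a factor of two looser than the target. Closing this gap will require a sharper estimate---most plausibly by carrying out the spherical-coordinate and Wendel's-inequality steps from the proof of Proposition~\ref{prop:key} directly on the multivariate integrand, so that the crude bound $(1+2/d)^{d/2}\le e$ is replaced by a tight ratio of gamma functions. An alternative is to apply a tangent-line inequality for $u^{(\alpha_i+1)/2}$ at the point $\lambda_i=\eta d\cdot a_i^{2/(\alpha_i+1)}$ (the mean of $a_i^{2/(\alpha_i+1)}\|Z\|^2$ under $Z\sim{\cal N}(0,\eta I)$) and jointly balance the quadratic and constant contributions against the constraint $\sum_i u_i\le 1$, where $u_i := a_i^{2/(\alpha_i+1)}\eta d$. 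Verifying that one of these refinements recovers exactly the claimed constants is the technical heart of the argument.
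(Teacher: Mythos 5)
Your high-level strategy coincides with the paper's: apply Young's inequality term by term to dominate each $a_i\|x\|^{\alpha_i+1}$ by a quadratic in $\|x\|$ plus a constant and then evaluate the resulting Gaussian integral. You also correctly locate the discrepancy. But your two proposed remedies (porting the spherical--coordinate/Wendel argument of Proposition~\ref{prop:key} to the multivariate integrand, or a tangent-line estimate) are the wrong place to look; the paper does neither. The $\frac{1-\alpha_i}{4}$ constant comes purely from a better-scaled Young factorization. Instead of $s=a_i\|x\|^{\alpha_i+1}$, $t=1$, write the product as $\bigl(a_i 2^{(1-\alpha_i)/2}\|x\|^{\alpha_i+1}\bigr)\cdot 2^{-(1-\alpha_i)/2}$ with the same conjugate pair $p_i=2/(\alpha_i+1)$, $q_i=2/(1-\alpha_i)$; this yields
\[
a_i\|x\|^{\alpha_i+1}\le (\alpha_i+1)\,2^{-2\alpha_i/(\alpha_i+1)}\,a_i^{2/(\alpha_i+1)}\|x\|^2 + \frac{1-\alpha_i}{4},
\]
and the elementary inequality $(\alpha_i+1)2^{-2\alpha_i/(\alpha_i+1)}\le 1$ on $[0,1]$ reduces the quadratic coefficient to exactly $a_i^{2/(\alpha_i+1)}$, matching hypothesis~\eqref{eq:aeta-new}. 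No gamma functions or tangent lines are needed; the fix is a one-line rescaling.

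By contrast, you should not chase the $e^{-1/2}$: your $e^{-1}$ is the one this argument actually supports. After the Young step the coefficient multiplying $\|x\|^2$ is $\tfrac{1}{2\eta} + \sum_i a_i^{2/(\alpha_i+1)}$, so the effective Gaussian variance $\tilde\eta$ satisfies $1/\tilde\eta = 1/\eta + 2\sum_i a_i^{2/(\alpha_i+1)}$; under~\eqref{eq:aeta-new} this gives $(1+2/d)^{-d/2}\ge e^{-1}$. The paper's display~\eqref{eq:eta'} drops that factor of $2$, and that slip --- not a cleverer estimate --- is the only source of $e^{-1/2}$ in~\eqref{ineq:int-new}. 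So the correct conclusion of your line of reasoning, once the Young scaling is tightened, is $(2\pi\eta)^{d/2}\exp\bigl(-1 + \tfrac14\sum_i(\alpha_i-1)\bigr)$; you should present that, note it is weaker than~\eqref{ineq:int-new} by a fixed factor $e^{1/2}$, and observe that this constant is what the downstream bounds in Proposition~\ref{prop:composite} and Theorem~\ref{thm:all} actually require.
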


\begin{proof}
Using the Young's inequality $st\le s^p/p + t^q/q$ with 
	\[
	s = a 2^{\frac{1-\alpha}{2}} \|x\|^{\alpha+1}, \quad t= \frac{1}{2^{\frac{1-\alpha}{2}}}, \quad p= \frac{2}{\alpha+1}, \quad q= \frac{2}{1-\alpha}, 
	\]
	we obtain 
\[
a\|x\|^{\alpha+1} \le (\alpha+1) a^{\frac{2}{\alpha+1}} 2^{\frac{-2\alpha}{\alpha+1}} \|x\|^2 + \frac{1-\alpha}{4} \le a^{\frac{2}{\alpha+1}} \|x\|^2 + \frac{1-\alpha}{4},
\]
where the second inequality is due to the fact that $(\alpha+1) 2^{\frac{-2\alpha}{\alpha+1}} \le 1$ for $\alpha \in [0,1]$.
Hence, the above inequality generalizes to
\[
\sum_{i=1}^n a_i\|x\|^{\alpha_i+1} \le \sum_{i=1}^n a_i^{\frac{2}{\alpha_i+1}}  \|x\|^2 + \sum_{i=1}^n \frac{1-\alpha_i}{4}.
\]
This inequality and Lemma \ref{lem:Gaussian} imply that
\begin{align}
    &\int_{\R^d} \exp\left(-\frac{1}{2\eta}\|x\|^2 - \sum_{i=1}^n a_i\|x\|^{\alpha_i+1}\right) \rd x \nn \\
    \ge &\int_{\R^d} \exp\left(-\frac{1}{2\eta}\|x\|^2 - \sum_{i=1}^n a_i^{\frac{2}{\alpha_i+1}} \|x\|^2 - \sum_{i=1}^n \frac{1-\alpha_i}{4}\right) \rd x \nn \\
    =& \exp \left(\frac{\sum_{i=1}^n (\alpha_i-1)}{4}\right) \int_{\R^d} \exp\left(-\frac{1}{2\tilde \eta}\|x\|^2\right) \rd x \nn \\
    = & \exp \left( \frac{\sum_{i=1}^n (\alpha_i-1)}{4}\right) (2\pi \tilde \eta )^{\frac d2}  \label{ineq:int-1}
\end{align}
where
\begin{equation}\label{eq:eta'}
    \frac{1}{\tilde \eta} = \frac{1}{\eta} +  \sum_{i=1}^n a_i^{\frac{2}{\alpha_i+1}}.
\end{equation}
It follows from \eqref{eq:aeta-new} that
$\tilde \eta \ge \left(1+\frac1d \right)^{-1} \eta$.
Plugging this inequality into \eqref{ineq:int-1}, we have
\begin{align*}
    \int_{\R^d} \exp\left(-\frac{1}{2\eta}\|x\|^2 - a\|x\|^{\alpha+1}\right) \rd x 
    \ge & (2\pi \eta )^{\frac d2} \left(1+\frac1d \right)^{-\frac{d}{2}} \exp \left( \frac{\sum_{i=1}^n (\alpha_i-1)}{4}\right)\\
    \ge & (2\pi \eta )^{\frac d2} \exp \left( -\frac12 + \frac{\sum_{i=1}^n (\alpha_i-1)}{4}\right),
\end{align*}
where in the second inequality, we use the fact that
\[
\left(1+\frac1d\right)^{\frac{d}{2}} \le \exp\left(\frac12 \right).
\]
\end{proof}

With Lemma \ref{lem:composite} and Proposition \ref{prop:key-new} in hand, we can bound the complexity of Algorithm \ref{alg:RGO-bundle} as follows.
The proof is postponed to Appendix~\ref{sec:proofs}.

\begin{proposition}\label{prop:composite}
If stepsize $\eta$ satisfies
\begin{equation}\label{ineq:assumption3}
    \eta d \sum_{i=1}^n \left(\frac{L_{\alpha_i}}{\alpha_i+1}\right)^{\frac{2}{\alpha_i+1}} \le 1,
\end{equation}
then rejection steps in Algorithm \ref{alg:RGO-bundle} take at most $\exp\left(\delta +\frac12 + \frac{\sum_{i=1}^n (1-\alpha_i)}{4}\right)$ iterations in expectation.
\end{proposition}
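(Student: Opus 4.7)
The plan is to mirror the proof of Proposition~\ref{prop:expected} exactly, simply swapping in the composite analogues at each step. First I would recall the standard rejection-sampling identity
\[
\mathbb{P}\left(U \le \frac{\exp(-f_y^\eta(X))}{\exp(-h_1(X))}\right) = \frac{\int_{\R^d} \exp(-f_y^\eta(x))\,\rd x}{\int_{\R^d} \exp(-h_1(x))\,\rd x},
\]
so that bounding the expected number of rejections reduces to producing a lower bound on the ratio on the right.

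Next, since the proposal $h_1$ defined in \eqref{def:tf} is unchanged in the composite setting (it depends only on Algorithm~\ref{alg:PBS}, not on the assumed smoothness structure), the denominator is still given exactly by the Gaussian integral computation used to obtain \eqref{eq:h1}, namely $\int \exp(-h_1) = \exp(-f_y^\eta(\tx_J)+\delta)(2\pi\eta)^{d/2}$. For the numerator, I would invoke Lemma~\ref{lem:composite} to get $\int \exp(-f_y^\eta) \ge \int \exp(-h_2)$ with $h_2$ as in \eqref{def:h2-hybrid}, and then factor out $\exp(-f_y^\eta(x^*))$ to reduce the remaining integral to one of the form treated by Proposition~\ref{prop:key-new}, with $a_i = L_{\alpha_i}/(\alpha_i+1)$.

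The key check is that the hypothesis \eqref{eq:aeta-new} of Proposition~\ref{prop:key-new} holds: with the above choice of $a_i$, the condition becomes $\eta d \sum_{i=1}^n (L_{\alpha_i}/(\alpha_i+1))^{2/(\alpha_i+1)} \le 1$, which is exactly the standing assumption \eqref{ineq:assumption3}. Applying Proposition~\ref{prop:key-new} then yields
\[
\int_{\R^d} \exp(-h_2(x))\,\rd x \ge \exp(-f_y^\eta(x^*))\,(2\pi\eta)^{d/2}\,\exp\!\left(-\tfrac{1}{2} + \tfrac{\sum_{i=1}^n(\alpha_i-1)}{4}\right).
\]

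Finally I would combine the two bounds, cancel the $(2\pi\eta)^{d/2}$ factor, and use the trivial inequality $f_y^\eta(\tx_J) \ge f_y^\eta(x^*)$ to drop the $\exp(f_y^\eta(\tx_J) - f_y^\eta(x^*))$ factor, obtaining
\[
\mathbb{P}(\text{accept}) \ge \exp\!\left(-\delta - \tfrac{1}{2} - \tfrac{\sum_{i=1}^n(1-\alpha_i)}{4}\right).
\]
Taking the reciprocal gives the claimed expected number of rejection iterations. No step here is particularly delicate; the only work is bookkeeping the constants produced by Proposition~\ref{prop:key-new}, which is the composite generalization where the factor $\exp(-1/2 + \sum(\alpha_i-1)/4)$ replaces the factor of $1/2$ appearing in the semi-smooth case.
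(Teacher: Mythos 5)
Your proposal is correct and follows essentially the same route as the paper's own proof: reduce to the acceptance-probability ratio, use the unchanged Gaussian denominator \eqref{eq:h1}, apply Lemma~\ref{lem:composite} and Proposition~\ref{prop:key-new} with $a_i = L_{\alpha_i}/(\alpha_i+1)$ (verifying \eqref{eq:aeta-new} against \eqref{ineq:assumption3}), and finish with $f_y^\eta(\tx_J) \ge f_y^\eta(x^*)$.
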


Through the above arguments, we show that Algorithm \ref{alg:RGO-bundle} designed for Hölder smooth potentials is equally effective for hybrid potentials satisfying \eqref{ineq:composite}. Combining Proposition~\ref{prop:composite} and Theorem~\ref{thm:bundle-hybrid} with the convergence results for ASF, we obtain the following iteration-complexity bounds for sampling from hybrid potentials. The proof is similar to that of Theorem \ref{thm:semi} and is thus omitted.

\begin{theorem}\label{thm:all}
Assume $f$ is a convex and satisfies \eqref{ineq:composite}. Consider Algorithm \ref{alg:ASF}, initialized with $\rho_0^X$ and stepsize $\eta$ satisfies \eqref{ineq:assumption3}, using Algorithm \ref{alg:RGO-bundle} as a RGO. Each RGO requires $\tilde {\cal O}\left(\frac{1}{d\delta} + 1\right)$ subgradient evaluations of $f$ and in expectation
$\exp\left(\delta +\frac12 + \frac{\sum_{i=1}^n (1-\alpha_i)}{4}\right)$ rejection steps.
The total complexity of Algorithm \ref{alg:ASF} to achieve $\varepsilon$ error in terms of KL divergence is 
    \[
        {\cal O}\left(\frac{\sum_{i=1}^n \left(\frac{L_{\alpha_i}}{\alpha_i+1}\right)^{\frac{2}{\alpha_i+1}} d W_2^2(\rho_0^X,\nu)}{\varepsilon}\right).
    \]
Moreover, if $\nu$ satisfies PI with constant $C_{\rm PI}>0$, then total complexity to achieve $\varepsilon$ error in terms of Chi-squared divergence is 
    \[
        \tilde{\cal O}\left(\sum_{i=1}^n \left(\frac{L_{\alpha_i}}{\alpha_i+1}\right)^{\frac{2}{\alpha_i+1}} d C_{\rm PI}\right).
    \]
\end{theorem}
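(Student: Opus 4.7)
The plan is to combine three ingredients already established in the excerpt: the convergence guarantees of ASF from Theorem~\ref{Thm:LC}, the iteration complexity of the regularized cutting-plane method in the composite setting from Theorem~\ref{thm:bundle-hybrid}, and the expected number of rejection steps per RGO call from Proposition~\ref{prop:composite}. The overall structure mirrors the one-line proof of Theorem~\ref{thm:semi}, but uses the composite-setting counterparts of the supporting results throughout.

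First I would fix the stepsize at (a constant times) the largest value allowed by~\eqref{ineq:assumption3}, namely $\eta = \Theta\big(1/(d\sum_{i=1}^n (L_{\alpha_i}/(\alpha_i+1))^{2/(\alpha_i+1)})\big)$. Substituting into Theorem~\ref{Thm:LC}(a) and requiring $W_2^2(\rho_0^X,\nu)/(k\eta)\le \varepsilon$ produces an outer iteration count of ${\cal O}\big(\sum_{i=1}^n (L_{\alpha_i}/(\alpha_i+1))^{2/(\alpha_i+1)}\,d\,W_2^2(\rho_0^X,\nu)/\varepsilon\big)$, which is the claimed KL-divergence bound. Under the Poincar\'e inequality (so $\lambda = 1/C_{\rm PI}$), Theorem~\ref{Thm:LC}(b) together with $\log(1+\lambda\eta)\ge \Omega(\lambda\eta)$ yields $k = \tilde{\cal O}(1/(\lambda\eta))$, matching the advertised chi-squared bound.

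Next I would estimate the cost per outer step. Theorem~\ref{thm:bundle-hybrid} gives $\tilde{\cal O}(\eta M + 1)$ subgradient evaluations per call to Algorithm~\ref{alg:PBS}, with $M$ as in~\eqref{def:M}. Since $(1-\alpha_i)/(\alpha_i+1)\in[0,1]$ for $\alpha_i\in[0,1]$, for any $\delta\in(0,1]$ each term in $M$ is bounded by $\delta^{-1}$ times an $O(1)$ constant multiplying $(L_{\alpha_i}/(\alpha_i+1))^{2/(\alpha_i+1)}$; combined with the stepsize inequality $\eta\sum_i(L_{\alpha_i}/(\alpha_i+1))^{2/(\alpha_i+1)}\le 1/d$, this yields $\eta M = \tilde{\cal O}(1/(d\delta))$ and hence the per-RGO subgradient count $\tilde{\cal O}(1/(d\delta)+1)$. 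The expected rejection count is provided directly by Proposition~\ref{prop:composite}, whose hypothesis is precisely \eqref{ineq:assumption3}.

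The only real obstacle is the algebraic bookkeeping required to verify that the single choice of $\eta$ simultaneously satisfies~\eqref{ineq:assumption3}, produces the advertised outer rate in Theorem~\ref{Thm:LC}, and keeps the inner cutting-plane bound at $\tilde{\cal O}(1/(d\delta))$; the three statements were designed to be compatible, so this is a short calculation rather than a conceptual step. Multiplying the outer iteration count by the per-step subgradient and rejection counts then yields both total-complexity claims, completing the proof.
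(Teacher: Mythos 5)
Your proposal is correct and follows the same route the paper takes: the paper dispatches Theorem~\ref{thm:all} with the remark that it is proved exactly like Theorem~\ref{thm:semi}, i.e.\ by combining Theorem~\ref{Thm:LC}, the composite inner-loop bound of Theorem~\ref{thm:bundle-hybrid}, and the rejection bound of Proposition~\ref{prop:composite} under the stepsize constraint~\eqref{ineq:assumption3}. Your bookkeeping step showing $\eta M = \tilde{\cal O}(1/(d\delta))$ via $(1-\alpha_i)/(\alpha_i+1)\le 1$ and $\eta\sum_i(L_{\alpha_i}/(\alpha_i+1))^{2/(\alpha_i+1)}\le 1/d$ is precisely the calculation the paper leaves implicit, so no gap.
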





\section{Conclusions}\label{sec:conclusion}

In this paper, we study proximal algorithms for both optimization and sampling lacking smoothness.
We first establish the complexity bounds of the regularized cutting-plane method for solving proximal subproblem \eqref{eq:prox-sub}, where $f$ is convex and satisfies either \eqref{ineq:semi-smooth} (Hölder smooth) or \eqref{ineq:composite} (hybrid). This efficient implementation gives an approximate solution to the proximal map in optimization, which is the core of both proximal optimization and sampling algorithms.

For optimization, we develop APBM using a novel adaptive stepsize strategy in the proximal point method and the approximate proximal map to solve each proximal subproblem. The proposed APBM is a universal method as it does not requuire any problem-dependent parameters as input.

For sampling, we propose an efficient method based on rejection sampling and the approximate proximal map to realize the RGO, which is a proximal sampling oracle. Finally, combining the sampling complexity of RGO and the complexity bounds of ASF, which is a counterpart of the proximal point method in sampling, we establish the complexity bounds of the proximal sampling algorithm in both Hölder smooth and hybrid settings.

This paper provides a unified perspective to study proximal optimization and sampling algorithms, while many other interesting questions remain open.
First, APBM is only optimal when $\alpha=0$, i.e., $f$ is Lipschitz continuous. We are interested in developing a universal method that is optimal for any $\alpha\in [0,1]$. 
One possible direction is to incorporate the acceleration technique into both the regularized cutting-plane method and the PPF.
Second, as acceleration methods are widely used in optimization to obtain optimal performance, accelerated proximal sampling algorithms are less explored. It is worth investigating a counterpart of the accelerated proximal point method \cite{MonteiroSvaiterAcceleration} in sampling.
Finally, we develop APBM as a universal method for non-smooth optimization, and it would be equally important to design a universal method for sampling.


\bibliographystyle{plain}
\bibliography{ref.bib}

\appendix








\section{Technical results}\label{sec:technical}

This section collects technical results that are useful in the paper.

\begin{lemma}[Gaussian integral]\label{lem:Gaussian}
For any $\eta>0$,
    \[
    \int_{\R^d} \exp\left(-\frac{1}{2\eta}\|x\|^2\right) \rd x = (2\pi \eta)^{d/2}.
    \]
\end{lemma}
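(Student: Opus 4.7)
The plan is to reduce the $d$-dimensional integral to a product of one-dimensional Gaussian integrals by exploiting the fact that $\|x\|^2 = \sum_{i=1}^d x_i^2$, so the integrand factorizes as
\[
\exp\!\left(-\frac{1}{2\eta}\|x\|^2\right) = \prod_{i=1}^d \exp\!\left(-\frac{x_i^2}{2\eta}\right),
\]
and then invoke Fubini/Tonelli (applicable since the integrand is nonnegative) to write the integral over $\R^d$ as a product of $d$ copies of the one-dimensional integral $I := \int_\R \exp(-x^2/(2\eta))\,dx$.

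Next I would prove the one-dimensional identity $I = \sqrt{2\pi\eta}$ by the classical polar-coordinates trick: square $I$ and interpret $I^2$ as a two-dimensional integral,
\[
I^2 = \int_{\R^2} \exp\!\left(-\frac{x^2+y^2}{2\eta}\right) dx\, dy,
\]
then switch to polar coordinates $(x,y) = (r\cos\theta, r\sin\theta)$ with Jacobian $r$. The resulting integral $\int_0^{2\pi}\int_0^\infty e^{-r^2/(2\eta)} r\, dr\, d\theta$ evaluates cleanly via the substitution $u = r^2/(2\eta)$ (so $du = r\,dr/\eta$) to give $2\pi\eta$. Taking the positive square root yields $I = \sqrt{2\pi\eta}$.

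Finally, combining the factorization with the one-dimensional value gives
\[
\int_{\R^d} \exp\!\left(-\frac{1}{2\eta}\|x\|^2\right) dx = I^d = (2\pi\eta)^{d/2},
\]
which is the claim. There is no real obstacle here; this is a textbook computation, and the only points requiring any care are the justification for applying Fubini/Tonelli (handled by nonnegativity of the integrand) and the evaluation of the polar integral, both entirely routine.
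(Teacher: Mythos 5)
Your proof is correct and is the standard textbook argument (factorize into one-dimensional integrals via Tonelli, then evaluate the one-dimensional Gaussian integral by the squaring/polar-coordinates trick). The paper itself states Lemma~\ref{lem:Gaussian} without proof, treating it as a well-known fact, so there is no paper proof to compare against; your write-up is a perfectly adequate justification.
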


The following lemma provides both lower and upper bounds on the ratio of gamma functions. Its proof can be found in \cite{wendel1948note}.

\begin{lemma}[Wendel's double inequality]
For $0<s<1$ and $t>0$, the gamma function defined as in \eqref{eq:gamma}
satisfies 
\[
\left(\frac{t}{t+s}\right)^{1-s} \le \frac{\Gamma(t+s)}{t^s \Gamma(t)} \le 1,
\]
or equivalently,
\begin{equation}\label{ineq:Wendel}
    t^{1-s} \le \frac{\Gamma(t+1)}{\Gamma(t+s)} \le (t+s)^{1-s}.
\end{equation}
\end{lemma}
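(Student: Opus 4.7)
The classical route, due to Wendel, is to apply H\"older's inequality directly to the integral representation of $\Gamma(t+s)$. I would start by writing
\[
\Gamma(t+s) = \int_0^\infty x^{t+s-1} e^{-x}\, \rd x = \int_0^\infty \bigl(x^{t} e^{-x}\bigr)^{s}\,\bigl(x^{t-1} e^{-x}\bigr)^{1-s}\, \rd x,
\]
which is the key decomposition that makes the exponent $s$ appear as a H\"older weight. Applying H\"older's inequality with conjugate exponents $1/s$ and $1/(1-s)$ then yields
\[
\Gamma(t+s) \le \Gamma(t+1)^{s}\,\Gamma(t)^{1-s} = \bigl(t\Gamma(t)\bigr)^{s}\Gamma(t)^{1-s} = t^{s}\,\Gamma(t),
\]
where I used the functional equation $\Gamma(t+1)=t\Gamma(t)$. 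Dividing both sides by $t^{s}\Gamma(t)$ produces the upper bound $\Gamma(t+s)/(t^{s}\Gamma(t)) \le 1$.

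For the lower bound, the trick is to apply the upper bound I just proved with the roles of $t$ and $s$ shifted: replace $t$ by $t+s$ and $s$ by $1-s$ (which is still in $(0,1)$). This gives
\[
\Gamma\bigl((t+s)+(1-s)\bigr) \le (t+s)^{1-s}\,\Gamma(t+s),
\]
i.e.\ $\Gamma(t+1) \le (t+s)^{1-s}\Gamma(t+s)$. Using $\Gamma(t+1)=t\Gamma(t)$ once more and rearranging, I get
\[
\frac{\Gamma(t+s)}{t^{s}\Gamma(t)} \ge \frac{t}{t^{s}(t+s)^{1-s}} = \left(\frac{t}{t+s}\right)^{1-s},
\]
which is the lower bound.

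The equivalent form \eqref{ineq:Wendel} then follows by multiplying all three terms of the double inequality by $t^{s}\Gamma(t)/\Gamma(t+s)$ and using $\Gamma(t+1)=t\Gamma(t)$ again to convert $t\cdot t^{s}\Gamma(t)\cdot(\text{stuff})$ into the ratio $\Gamma(t+1)/\Gamma(t+s)$. The only nontrivial step is the very first one, namely spotting the H\"older decomposition; after that the proof is a short algebraic exercise and a self-application of the upper bound. Since the result is standard and the paper also defers to \cite{wendel1948note}, I would keep the write-up to these few lines rather than elaborating on H\"older's inequality itself.
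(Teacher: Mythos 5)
Your proof is correct and is precisely the argument of the cited reference \cite{wendel1948note}: the paper itself gives no proof and simply defers to Wendel, whose original note uses exactly this H\"older decomposition for the upper bound and the self-application with $(t,s)\mapsto(t+s,1-s)$ for the lower bound. The algebra checks out (the exponent in the decomposition is $ts+(t-1)(1-s)=t+s-1$, and the equivalence of the two displayed forms follows from $\Gamma(t+1)=t\Gamma(t)$ as you state), so nothing further is needed.
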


\begin{lemma}\label{lem:tech}
Assume $f$ is convex and $L_\alpha$-semi-smooth (i.e., satisfying \eqref{ineq:semi-smooth}, then \eqref{ineq:semi} holds for every $u,v \in \R^d$.
Assume $f$ is convex and satisfies \eqref{ineq:composite}, then \eqref{ineq:hybrid} holds for every $u,v \in \R^d$.
\end{lemma}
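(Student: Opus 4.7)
The plan is to prove both inequalities by reducing to the one-dimensional fundamental theorem of calculus along the segment connecting $v$ and $u$. Define $\varphi:[0,1]\to\R$ by $\varphi(t):=f(v+t(u-v))$. Since $f$ is convex and finite on $\R^d$, it is locally Lipschitz, so $\varphi$ is absolutely continuous on $[0,1]$, differentiable almost everywhere, and satisfies $\varphi(1)-\varphi(0)=\int_0^1 \varphi'(t)\,\rd t$. A standard chain-rule computation for convex functions (using e.g.\ the subdifferential sum rule along a line) gives $\varphi'(t)=\inner{g(t)}{u-v}$ for a.e.\ $t$, where $g(t)\in\partial f(v+t(u-v))$ is a measurable selection whose norm agrees with the subgradient $f'$ appearing in \eqref{ineq:semi-smooth}/\eqref{ineq:composite} (up to the standard convention that $f'$ denotes any fixed subgradient selection).

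First I would handle the semi-smooth case. Subtract the linear piece and write
\[
f(u)-f(v)-\inner{f'(v)}{u-v}=\int_0^1\inner{g(t)-f'(v)}{u-v}\,\rd t.
\]
Then I would apply Cauchy–Schwarz inside the integral and bound $\|g(t)-f'(v)\|$ using \eqref{ineq:semi-smooth} with the pair $(v+t(u-v),v)$, which gives $\|g(t)-f'(v)\|\le L_\alpha\,t^\alpha\|u-v\|^\alpha$. Integrating yields
\[
f(u)-f(v)-\inner{f'(v)}{u-v}\le L_\alpha\|u-v\|^{\alpha+1}\int_0^1 t^\alpha\,\rd t=\frac{L_\alpha}{\alpha+1}\|u-v\|^{\alpha+1},
\]
which is \eqref{ineq:semi}.

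The composite case follows by the identical argument, simply replacing the single-term bound $L_\alpha t^\alpha\|u-v\|^\alpha$ by the summed bound $\sum_{i=1}^n L_{\alpha_i} t^{\alpha_i}\|u-v\|^{\alpha_i}$ supplied by \eqref{ineq:composite}, and integrating termwise using $\int_0^1 t^{\alpha_i}\,\rd t=1/(\alpha_i+1)$. This yields \eqref{ineq:hybrid}.

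The only nontrivial point in the argument is justifying the subgradient chain rule $\varphi'(t)=\inner{g(t)}{u-v}$ for a.e.\ $t$ and the absolute continuity of $\varphi$; this is the main (and only) obstacle, but it is a standard fact for convex functions on $\R^d$ restricted to a line segment, so I would just cite it rather than reprove it. Everything else reduces to Cauchy–Schwarz and an elementary power integral.
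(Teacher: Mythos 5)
Your proposal is correct and follows essentially the same route as the paper's own proof: integrate the subgradient along the segment from $v$ to $u$, apply Cauchy--Schwarz, invoke \eqref{ineq:semi-smooth} (or \eqref{ineq:composite}) to bound the integrand, and evaluate $\int_0^1 t^\alpha\,\rd t$. You are merely more explicit about the foundational justifications (absolute continuity of $\varphi$, the measurable-selection chain rule) that the paper silently takes for granted, and you also correct a sign typo in the paper's integrand ($v+\tau(v-u)$ should read $v+\tau(u-v)$), but the substance of the argument is identical.
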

\begin{proof}
We first consider the case when $f$ is convex and $L_\alpha$-semi-smooth.
It is easy to see that
		\begin{align*}
			f(u)&=f(v)+\int_{0}^{1} \inner{f'(v+\tau (v-u))} {u-v} \rd\tau \\
			&=f(v) + \inner{f'(v)}{u-v}+\int_{0}^{1} \inner{f'(v+\tau (v-u)) - f'(v)} {u-v} \rd\tau.
		\end{align*}
Using the above identity, the Cauchy-Schwarz inequality, and \eqref{ineq:semi-smooth}, we have
		\begin{align*}
			f(u) - f(v) - \inner{f'(v)}{u-v} 
			&= \int_{0}^{1}  \inner{f'(v+\tau (v-u)) - f'(v)} {u-v} \rd\tau \\
			&\le \int_{0}^{1} \left\| f'(v+\tau (v-u)) - f'(v)\right\| \|u-v\|  \rd\tau \\
			&\le \int_{0}^{1}  L_\alpha \tau^\alpha \|u-v\|^{\alpha+1}  \rd\tau
		= \frac{L_\alpha}{\alpha+1}\|u-v\|^{\alpha+1}.
		\end{align*}
Hence, \eqref{ineq:semi} holds. More generally, if $f$ satisfies \eqref{ineq:composite}, then  \eqref{ineq:hybrid} follows the same argument.
\end{proof}

\begin{lemma}\label{lem:Lip}
    Consider $\phi(t) = |t|^p$ and $\phi'(t) = p\,\operatorname{sign}(t)\,|t|^{p-1}$ for $t\in \R$ and some $p\in [1,2]$. Then, for any $u,v \in \R$, we have 
    \[
    |\phi'(u)-\phi'(v)| \le p\,2^{2-p}\,|u-v|^{p-1}.
    \]
\end{lemma}

\begin{proof}
Let $r := p-1 \in [0,1]$.
We consider the following two cases and prove
\[
|\phi'(u)-\phi'(v)| \le p\,2^{1-r}\,|u-v|^{r}.
\]

\medskip
\noindent\textbf{Case 1: $u v \ge 0$.}
Here, $\operatorname{sign}(u)=\operatorname{sign}(v)$, so
\[
|\phi'(u)-\phi'(v)|
= p\,\big||u|^{r}-|v|^{r}\big|.
\]
Without loss of generality, assume $a=|u|\ge b=|v|\ge 0$.
By the subadditivity of the function $x \mapsto x^{r}$ with $r\in [0,1]$, we have $(a-b)^{r} \ge a^{r} - b^{r}$ for all $a\ge b\ge 0$.
Hence
\[
|\,|u|^{r}-|v|^{r}\,|
= a^{r}-b^{r}
\le (a-b)^{r}
= \big||u|-|v|\big|^{r}
= |u-v|^{r}.
\]
Therefore,
\[
|\phi'(u)-\phi'(v)| \le p\,|u-v|^{r}.
\]

\medskip
\noindent\textbf{Case 2: $u v < 0$.}
In this case, $\operatorname{sign}(u)=-\operatorname{sign}(v)$, so
\[
|\phi'(u)-\phi'(v)| = p\,(|u|^{r}+|v|^{r}).
\]
By the concavity of $x\mapsto x^{r}$, we have
\[
|u|^{r} + |v|^{r} \le 2^{1-r} (|u|+|v|)^{r}
= 2^{1-r} |u - v|^{r},
\]
and thus
\[
|\phi'(u)-\phi'(v)| \le p\,2^{1-r}\,|u-v|^{r}.
\]
The conclusion immediately follows from the above two cases.
\end{proof}

\section{Missing proofs in Subsection \ref{sec:composite}}\label{sec:proofs}

\noindent
{\bf Proof of Lemma \ref{lem:composite}:}
It follows from the same argument as in the proof of Lemma \ref{lem:h1h2delta} that \eqref{eq:equal} holds. Using \eqref{eq:equal}, Lemma \ref{lem:bundle}(e), and \eqref{ineq:hybrid} with $(u,v)=(x,x^*)$, we conclude that 
\[
f_y^\eta(x) - f_y^\eta(x^*) \le \sum_{i=1}^n \frac{L_{\alpha_i}}{\alpha_i+1} \|x-x^*\|^{\alpha_i+1} + \frac{1}{2\eta}\|x-x^*\|^2.
\]
The lemma immediately follows from the above inequality and the definition of $h_2$ in \eqref{def:h2-hybrid}.
\QEDA

\vgap

\noindent
{\bf Proof of Proposition \ref{prop:composite}:}
If follows directly from the definition of $h_2$ in \eqref{def:h2-hybrid} that 
\begin{align*}
    \int_{\R^d} \exp(-h_2(x)) \rd x &= \exp(-f_y^\eta(x^*)) \int_{\R^d} \exp\left(-\frac{1}{2\eta} \|x-x^*\|^2 - \sum_{i=1}^n \frac{L_{\alpha_i}}{\alpha_i+1} \|x-x^*\|^{\alpha_i+1}\right) \rd x.
\end{align*}
It is easy to see that \eqref{ineq:assumption3} implies that \eqref{eq:aeta-new} holds with $a_i=\frac{L_{\alpha_i}}{\alpha_i+1}$. Hence, by Proposition \ref{prop:key-new}, we have \eqref{prop:key-new} holds with $a_i=\frac{L_{\alpha_i}}{\alpha_i+1}$, i.e.,
\[
\int_{\R^d} \exp\left(-\frac{1}{2\eta} \|x-x^*\|^2 - \sum_{i=1}^n \frac{L_{\alpha_i}}{\alpha_i+1} \|x-x^*\|^{\alpha_i+1}\right) \rd x \ge (2\pi \eta )^{\frac d2} \exp \left( -\frac12 + \frac{\sum_{i=1}^n (\alpha_i-1)}{4}\right).
\]
The above two inequalities and Lemma \ref{lem:composite} imply that
\[
\int_{\R^d} \exp(-f_y^\eta(x)) \rd x \ge
\int_{\R^d} \exp(-h_2(x)) \rd x \ge (2\pi \eta )^{\frac d2} \exp \left( -f_y^\eta(x^*)-\frac12 + \frac{\sum_{i=1}^n (\alpha_i-1)}{4}\right).
\]
As in the proof of Proposition \ref{prop:expected}, \eqref{eq:probability} and \eqref{eq:h1} hold.
Using \eqref{eq:probability}, \eqref{eq:h1}, and the above inequality, we have
\[
\mathbb{P}\left(U \leq \frac{\exp(-f_y^\eta(X))}{\exp(-h_1(X))}\right) 
\ge \exp\left(f_y^\eta(\tx_J) - f_y^\eta(x^*) - \delta -\frac12 + \frac{\sum_{i=1}^n (\alpha_i-1)}{4}\right).
\]
 The above inequality and the fact that $f_y^\eta(\tx_J) \ge f_y^\eta(x^*)$ immediately imply that
\[
\frac{1}{\mathbb{P}\left(U \leq \frac{\exp(-f_y^\eta(X))}{\exp(-h_1(X))}\right) }
\le \exp\left(\delta +\frac12 + \frac{\sum_{i=1}^n (1-\alpha_i)}{4}\right).
\]
\QEDA

\end{document}